\newif\ifpreprint%
\def\sectionfont{\sffamily\Large\bfseries\boldmath}
\def\subsectionfont{\sffamily\large\bfseries\boldmath}
\def\paragraphfont{\sffamily\normalsize\bfseries\boldmath}
\titleformat*{\section}{\sectionfont}
\titleformat*{\subsection}{\subsectionfont}
\titleformat*{\subsubsection}{\paragraphfont}
\titleformat*{\paragraph}{\paragraphfont}
\titleformat*{\subparagraph}{\paragraphfont}
\setlist{nolistsep}
\newcounter{algorithmctr}[section]
\renewcommand{\thealgorithmctr}{\thesection.\arabic{algorithmctr}}
\newtheorem{theorem}{Theorem}
\newtheorem{lemma}{Lemma}
\newtheorem{assumption}{Assumption}
\newtheorem{definition}{Definition}
\DeclareRobustCommand{\qed}{%
	\ifmmode % if math mode, assume display: omit penalty etc.
	\else \leavevmode\unskip\penalty9999 \hbox{}\nobreak\hfill
	\fi
	\quad\hbox{\qedsymbol}}
\newcommand{\openbox}{\leavevmode
	\hbox to.77778em{%
		\hfil\vrule
		\vbox to.675em{\hrule width.6em\vfil\hrule}%
		\vrule\hfil}}
\newcommand{\qedsymbol}{\openbox}
\newenvironment{proof}[1][\proofname]{\par
	\normalfont
	\topsep6\p@\@plus6\p@ \trivlist
	\item[\hskip\labelsep\itshape
	#1.]\ignorespaces
}{%
	\qed\endtrivlist
}
\newcommand{\proofname}{Proof}
\title{\bfseries\sffamily Burer-Monteiro ADMM for Large-scale SDPs}
\author{Yuwen Chen and Paul Goulart}
\begin{document}

\ifpreprint
\maketitle
	\begin{abstract}
		We propose a bilinear decomposition for the Burer-Monteiro method and combine it with the standard Alternating Direction Method of Multipliers algorithm for semidefinite programming. Bilinear decomposition reduces the degree of the augmented Lagrangian from four to two, which makes each of the subproblems a quadratic programming and hence computationally efficient. Our approach is able to solve a class of large-scale SDPs with diagonal constraints. We prove that our ADMM algorithm converges globally to a first-order stationary point, and show by exploiting the negative curvature that the algorithm converges to a point within $O(1-1/r)$ of the optimal objective value. Additionally, the proximal variant of the algorithm can solve block-diagonally constrained SDPs with global convergence to a first-order stationary point. Numerical results show that both our ADMM algorithm and the proximal variant outperform the state-of-art Riemannian manifold algorithms and can reach the global optimum empirically.
	\end{abstract}
\else
	\title{Burer-Monteiro ADMM for Large-scale SDPs}

	\author*[1]{\fnm{Yuwen} \sur{Chen}}\email{yuwen.chen@eng.ox.ac.uk}

	\author*[1]{\fnm{Paul} \sur{Goulart}}\email{paul.goulart@eng.ox.ac.uk}

	\affil*[1]{\orgdiv{Department of Engineering Science}, \orgname{University of Oxford}, \orgaddress{\street{Parks Road, OX1 3PJ}, \city{Oxford}, \country{UK}}}

	\date{Received: date / Accepted: date}

	\abstract{We propose a bilinear decomposition for the Burer-Monteiro method and combine it with the standard Alternating Direction Method of Multipliers algorithm for semidefinite programming. Bilinear decomposition reduces the degree of the augmented Lagrangian from four to two, which makes each of the subproblems a quadratic programming and hence computationally efficient. Our approach is able to solve a class of large-scale SDPs with diagonal constraints. We prove that our ADMM algorithm converges globally to a first-order stationary point, and show by exploiting the negative curvature that the algorithm converges to a point within $O(1-1/r)$ of the optimal objective value. Additionally, the proximal variant of the algorithm can solve block-diagonally constrained SDPs with global convergence to a first-order stationary point. Numerical results show that both our ADMM algorithm and the proximal variant outperform the state-of-art Riemannian manifold algorithms and can reach the global optimum empirically.}

	\keywords{SDP, Manifold, ADMM, Low-rank}

	\maketitle
\fi

\section{Introduction}
\subsection{Related Work}
SDPs \cite{Vandenberghe96} are widely used in various fields such as control engineering \cite{Boyd94}, signal processing \cite{Huang10}, combinatorial optimization \cite{Boyd97,Benson00} and finance \cite{Gepp20}. While interior-point methods \cite{Helmberg96} can be used to solve small to medium size SDPs in polynomial time, they do not scale well with dimension of the problem. At present, several methods have been proposed to tackle the scalability of SDPs \cite{Majumdar20}. One is to exploit the chordal decomposition of a SDP \cite{Fukuda00,Vandenberghe15}, which typically reduce a single cone constraint to a collection of constraints on lower dimensional cones. This technique has been successfully implemented for interior-point methods \cite{Andersen10} and ADMM-based methods \cite{Zheng20,Garstka21}. Approaches based on diagonal dominance (dd) and scaled diagonal dominance (sdd) have been studied for solving SDP approximately and shown to be effective in the SDP relaxation of the sum-of-square problems \cite{Ahmadi19}. Another direction is to exploit low-rank information of a SDP \cite{Lemon16, Burer03, Freund15, Yurtsever21}, a popular variation of which is called the Burer-Monteiro method \cite{Burer03}, which substitutes $X \succeq 0$ with a low-rank factorization $X = V V^\top$ where $V \in \mathbb{R}^{n \times r}$ with $r \ll n$, saving both computational time and data storage. Frank-Wolfe algorithms \cite{Jaggi13}, which are projection-free and circumvent a full eigenvalue decomposition, have also been proposed to solve large-scale SDPs and exploit low-rank information via in-face exploitation \cite{Freund15}. Meanwhile, some work has been done on the storage issue of SDPs. The sketching method, which has been well-studied in numerical linear algebra \cite{Woodruff14}, is applied to SDPs \cite{Yurtsever21,Duchi20}. Based on approximate complementary slackness, \cite{Ding20} proposes a method to iteratively estimate a low-rank eigenspace and then optimize the SDP with reduced dimension. In addition, SDPs can be reformulated as a equivalent problem that can be solved efficiently, e.g, formulating diagonally constrained SDPs as a nonconvex QP \eqref{sphere-form} and solving it by block-coordinate descent methods~\cite{Wang17,Erdogdu21}. %Also, SDPs can also be transformed to manifold optimization via the Burer-Monteiro approach and then solved efficiently by Riemannian gradient and Riemannian trust-region methods when the constraint matrices are mutually linearly independent \cite{Journee10}.\\

The Burer-Monteiro (BM) method is a popular low-rank method for SDPs and works well in practice \cite{Burer03}. For the standard primal SDP problem
\begin{align}
	\begin{aligned}
		\min \{\langle C, X \rangle: \langle A_i, X \rangle = b_i, i = 1, \dots, m, X \succeq 0 \},
	\end{aligned} \label{primal-sdp}
\end{align}
the classical BM approach replaces the positive semidefinite constraint $X \succeq 0 $ with the factorization $X = VV^\top$ where $V \in \mathbb{R}^{n \times r}$, so that the problem becomes
\begin{align}
	\min \{\langle C, VV^\top \rangle: \langle A_i, VV^\top \rangle = b_i, i = 1, \dots,m \}. \label{BM-approach}
\end{align}
for very large scale problems with low rank solutions. Compared with \eqref{primal-sdp}, \eqref{BM-approach} requires storage of $O(nr)$ of $V$ instead of $O(n^2)$ of $X$ and eliminates the semidefinite cone constraint that can be beneficial for computation. The problem \eqref{BM-approach} is nonconvex due to the existence of quadratic equality constraints and global optimality is hard to guarantee, although a global optimum is usually obtained using the BFGS algorithm \cite{Burer03}. Generally speaking, the rank $r$ is selected according to the rank-inequality constraint in SDPs \cite{Alizadeh97, Pataki98, Lemon16}. Such a choice ensures that any local minimum is a global optimum for almost all cost matrices $C$ in \eqref{SDP-form} \cite{Boumal16} and is tight as shown in \cite{Waldspurger20}.

The alternating direction method of multipliers (ADMM), which is closely related to the alternating direction method and the augmented Lagrangian method, has become very popular in recent years for use in the large-scale optimization \cite{Boyd11}. Convergence proofs of ADMM in convex optimization typically rely on the design of a Lyapunov function \cite{Boyd11},\cite{Jiang19} which is monotonically decreasing, or the interpretation of it as a variant of the Douglas Rachford (DR) algorithm \cite{Eckstein92,Banjac19,Garstka21} whose convergence is proved based on monotone operator theory \cite{Bauschke11}. ADMM often works extremely well even for nonconvex problems \cite{Lai14,Kovnatsky16}, such as nonnegative matrix factorization \cite{Lu17}, optimal power flow \cite{You14} and image reconstruction \cite{Barber21} but convergence analysis is much less mature. In recent years, some results have been developed for the convergence of nonconvex ADMM to first-order stationary points in limited situations \cite{Hong16, Wang19, Jiang19, Bolte14}.

\subsection{Our Contribution}
In this paper, our contribution are:
\begin{enumerate}
	% \item We propose an ADMM algorithm with a novel bilinear decomposition to the Burer-Monteiro approach for diagonally constrained SDPs in Section~\ref{section-BM-ADMM} and improve the global convergence to a first-order stationary point in Section~\ref{section-1st-convergence-sequence}, compared to previous results in our conference paper before~\cite{Chen22}, as shown in Section~\ref{section-1st-order-convergence-objective}.
	\item We propose an ADMM algorithm with a novel bilinear decomposition to the Burer-Monteiro approach for diagonally constrained SDPs in Section~\ref{section-BM-ADMM}.  We prove convergence of this algorithm in \textit{value} in Section~\ref{section-1st-order-convergence-objective} and in \textit{iterates} in Section~\ref{section-1st-convergence-sequence}.
	\item In Section~\ref{section-negative-curvature}, we show that exploiting negative curvature guarantees convergence to a solution of $O(1-1/r)$ optimality.
	\item In Section~\ref{section-proximal-variant}, we introduce a proximal variant of the algorithm that can also solve a broader class of diagonally constrained SDPs with global convergence to a first-order stationary point.
	\item Finally, numerical results show both algorithm and its proximal variant perform better than the state-of-the-art Riemannian manifold algorithms and scales well w.r.t. the dimension of SDPs in Section~\ref{section-experiments}.
\end{enumerate}
A preliminary version of the results of Section~\ref{section-1st-order-convergence-objective} appeared in~\cite{Chen22}.

\subsection{Notation and Preliminaries}
The Frobenius norm is $\|A\|_F = \sqrt{\langle A,A \rangle} = \sqrt{\text{Tr}(A^\top A)}$, where $A_{i,j}$ is the entry at the $i-$th row and $j-$th column of matrix $A$. The induced operator $p$-norm of matrix $A \in \mathbb{R}^{n \times n}$ is $\|A\|_p = \sup_{\|x\| \ne 0} \frac{\|Ax\|_p}{\|x\|_p}$, where $\|A\|$ denotes $2$-operator norm and  $\|A\|_{\infty}$ is the infinite matrix norm, i.e. $\|A\|_{\infty} = \max_{1 \le i \le n}\sum_{j=1}^n  \vert a_{ij} \vert$. $\mathbb{S}^n$ denotes the set of symmetric matrices and $\mathbb{S}_+^n$ denotes the set of positive semidefinite matrices. $X \succeq 0$ is equivalent to $X \in \mathbb{S}_+^n$. $C_{i, \cdot}$ and $C_{\cdot, j}$ denote the $i$-th row and the $j$-th column of matrix $C$ respectively. The indicator function $\mathcal{I}_{\mathcal{X}}(x)$ of set $\mathcal{X}$ and the limiting-subdifferential $\partial f(x)$ of function $f(x)$ follow definitions in~\cite{Attouch13}. The set $[n]$ denotes $\{1,\ldots,n\}$ and $\lceil{x}\rceil$ is the ceil integer to $x$.

\section{Bilinear Decomposition \& ADMM Algorithm}\label{section-BM-ADMM}
We consider semidefinite programming (SDP) with diagonal constraints in the following form,
\begin{align}
	\begin{aligned}
		\min \quad & \langle C, X \rangle \\
		\text{s.t} \quad & X_{ii} = 1, \text{for } i \in [n] \\
		& \quad X \in \mathbb{S}_+^n,
	\end{aligned} \label{SDP-form}
\end{align}
where $C \in \mathbb{S}^n$. This kind of SDP applies to many applications, including the max-cut problem \cite{Goemans95}, graphical model inference \cite{Erdogdu17} and the community detection problem \cite{Bandeira16}.

It is well known that \eqref{SDP-form} is equivalent to \cite{Wang17,Erdogdu21}:
\begin{align}
	\begin{aligned}
		\min \quad &  f(\sigma): = \langle C, \sigma \sigma^\top \rangle \\
		\text{s.t.} \quad & \| \sigma_i \| = 1, \text{for } i \in [n],
	\end{aligned} \label{sphere-form}
\end{align}
if $r \ge \lceil{\sqrt{2n}}\rceil$, where $\mathbf{\sigma} := [\sigma_1, \sigma_2, ..., \sigma_n]^\top \in \mathbb{R}^{n \times r}$ and $\sigma_i \in \mathbb{R}^r$ is the $i$-th row of $\sigma$. We denote the unit norm constraint set as $\mathcal{M} = \{[\sigma_1, \sigma_2, \dots, \sigma_n]^\top \in \mathbb{R}^{n \times r} \mid \|\sigma_i\|=1, \ i \in [n]\}$. The problem \eqref{sphere-form} is inspired by the popular Burer Monteiro (BM) method, which substitutes the SDP constraint $X \succeq 0$ with a low-rank factorization $X = \sigma \sigma^\top$. The diagonal constraints $X_{ii} = 1, \text{for } i \in [n]$ are replaced by norm constraints $\|\sigma_i\| = 1, \text{for } i \in [n]$. It is easy to see that the constraint set is decoupled while the objective function is coupled w.r.t $\sigma_i, \forall i \in [n]$. In this paper, we introduce a bilinear decomposition for \eqref{sphere-form} as in \cite{Lu17}. Instead of setting $X = \mathbf{\sigma}\mathbf{\sigma}^\top$, we decompose the variable $X$ as a bilinear term, $X = \sigma \tilde{\sigma}^\top$ subject to $\sigma = \tilde{\sigma}$, resulting in the following equivalent problem formulation:
\begin{align}
\begin{aligned}
	\min  & \ \langle C, \tilde{\sigma}\sigma^\top \rangle \\
	\text{s.t.}  & \ \tilde{\sigma} \in \mathcal{M}, \ \tilde{\sigma} = \sigma.
\end{aligned}\label{bilinear-decomposition}
\end{align}
We define the corresponding augmented Lagrangian as:
\begin{align}
	L_{\rho}(\tilde{\sigma}, \sigma, y) := & \langle C, \tilde{\sigma} \sigma^\top \rangle + \langle y, \tilde{\sigma} - \sigma \rangle + \frac{\rho}{2}\|\tilde{\sigma} - \sigma\|^2_F + \sum_{i=1}^n \mathcal{I}_{\|\tilde{\sigma}_i\|=1}(\tilde{\sigma}_i)\notag\\
	= & \sum_{i=1}^n \left[ \frac{\rho}{2} \|\tilde{\sigma}_i - \sigma_i\|^2 + y_i^\top(\tilde{\sigma}_i - \sigma_i) + \mathcal{I}_{\|\tilde{\sigma}_i\|=1}(\tilde{\sigma}_i) \right] + \langle C, \tilde{\sigma} \sigma^\top \rangle,
\end{align}

where $y := [y_1, y_2, \dots, y_n]^\top \in \mathbb{R}^{n \times r}$ is the dual variable for the equality constraint $\tilde{\sigma} = \sigma$ and $ \tilde{\sigma} := [\tilde{\sigma}_1, \tilde{\sigma}_2, ..., \tilde{\sigma}_n]^\top \in \mathbb{R}^{n \times r}$. A standard ADMM algorithm given this splitting is then
\begin{subequations} \label{ADMM-all}
	\begin{align}
		\tilde{\sigma}^{k+1} = & \quad \mathop{\text{argmin}}_{\tilde{\sigma} \in \mathcal{M}} \  L_{\rho}(\tilde{\sigma}, \sigma^k, y^k), \label{ADMM-1}\\
		\sigma^{k+1} = &\quad \mathop{\text{argmin}}_{\sigma} \ L_{\rho}(\tilde{\sigma}^{k+1}, \sigma, y^k), \label{ADMM-2}\\
		y^{k+1} = & \quad y^k + \rho(\tilde{\sigma}^{k+1} - \sigma^{k+1}). \label{ADMM-3}
	\end{align}
\end{subequations}
For the ADMM algorithm above, step \eqref{ADMM-1} corresponds to the solution of $n$ decomposable nonconvex QPs w.r.t. $\tilde{\sigma}_i$ for $ \forall i \in [n]$,
\begin{align}
	\tilde{\sigma}_i^{k+1} & = \mathop{\text{argmin}}_{\|\tilde{\sigma}_i\|=1} \ \frac{\rho}{2} \|\tilde{\sigma}_i - \sigma_i^k\|^2 + {y_i^k}^\top \tilde{\sigma}_i + \sum_{j=1}^n C_{i,j} \langle {\sigma_j^k}, \tilde{\sigma}_i \rangle \label{QP-1}
	%\sigma_i^{k+1} & = \mathop{\text{argmin}}_{\sigma_i} \ \frac{\rho}{2} %\|\sigma_i - \tilde{\sigma}_i^{k+1}\|^2 - {y_i^k}^\top \sigma_i + %\sum_{j=1}^n C_{i,j} \langle \tilde{\sigma}^{k+1}_j, \sigma_i \rangle , \quad %i \in [n]. \label{QP-2}
\end{align}
and has a closed-form solution
\begin{align}
	\tilde{\sigma}^{k+1} \gets \text{Normalize-Row} \left(\sigma^k - \frac{1}{\rho} (y^k + C \sigma^k)\right), \label{normalize-step}
\end{align}
which is to normalize each row vector into unit length. We will require the following assumption to ensure that \eqref{normalize-step} is valid everywhere.
\begin{assumption}\label{assumption-1}
	For $\forall i \in [n]$, $\|\gamma_i^k\|$ is nonzero for any iteration $k$ where $\gamma_i^k$ is the $i$-th row of
	\begin{align}
		\gamma^k := \sigma^k - \frac{1}{\rho} (y^k + C \sigma^k), \label{gamma-update}
	\end{align}
	i.e. $\gamma^k = [\gamma_1^k, \cdots, \gamma_n^k]^\top$.
\end{assumption}
In Lemma~\ref{lemma-gamma-bound}, we will show that Assumption~\ref{assumption-1} is always satisfied for every iteration $k$ given an appropriate choice of $\rho$. In addition, step \eqref{ADMM-2} is an unconstrained QP and amounts to
\begin{align}
	\sigma^{k+1} \gets \tilde{\sigma}^{k+1} + \frac{1}{\rho}(y^k - C\tilde{\sigma}^{k+1}) \label{sigma-update}.
\end{align}

Our approach, outlined in \eqref{ADMM-all}, can therefore be implemented as in Algorithm \ref{alg_ADMM}.
\begin{algorithm}[H]
	\caption{ADMM Burer-Monteiro algorithm (ADMM-BM)}
	\label{alg_ADMM}
	\begin{algorithmic}[1]
		\State
		Initialization: set $\sigma^0 = \tilde{\sigma}^0 \in \mathcal{M},y^0 = C \tilde{\sigma}^0$.
		\\
		\While {\textit{termination criteria not satisfied}}
		\State $\tilde{\sigma}^{k+1} \gets \text{Normalize-Row} \left(\sigma^k - \frac{1}{\rho} (y^k + C \sigma^k)\right)$
		\State $\sigma^{k+1} \gets \tilde{\sigma}^{k+1} + \frac{1}{\rho}(y^k - C\tilde{\sigma}^{k+1})$
		\State $y^{k+1} \gets y^k + \rho(\tilde{\sigma}^{k+1} - \sigma^{k+1})$
		\EndWhile
	\end{algorithmic}
\end{algorithm}
Algorithm \ref{alg_ADMM} has a nice property that connects the primal variable $\tilde{\sigma}$ with the dual variable $y$ for any $k \ge 1$, i.e.\
\begin{align}
	y^k & = y^{k-1} + \rho(\tilde{\sigma}^{k} - \sigma^{k}) = C\tilde{\sigma}^{k},  \label{prim-dual-link}
\end{align}
where the last equality comes from step 3 in Algorithm~\ref{alg_ADMM}.

\section{Convergence of the Objective Value}\label{section-1st-order-convergence-objective}
A first order stationary point of \eqref{sphere-form} can be defined \cite[Def.\ 3.6]{Jiang19} as
\begin{align}
	-(C\tilde{\sigma}^*)_{i, \cdot}^\top & \in \frac{\partial I_{\|\tilde{\sigma}^*_i\|=1}(\tilde{\sigma}^*_i)}{\partial \tilde{\sigma}^*_i} , \ \forall i \in [n], \label{1st-stationary-point}
\end{align}
where $\tilde{\sigma}^*$ belongs to the set of first-order stationary points and $(C\tilde{\sigma}^*)_{i, \cdot}$ denotes the $i$-th row of the matrix $C\tilde{\sigma}^*$.  The main result of this section is then the following theorem:
\begin{theorem} \label{theorem-global}
	If $\rho \ge \max \left\{10\|C\|_{\infty}, 2\cdot \|C\|\right\}$, then the augmented Lagrangian sequence $L_{\rho}(\tilde{\sigma}^k, \sigma^k, y^k)_{k=1}^{+\infty}$ converges to the objective function $ \langle C\tilde{\sigma}^k, \tilde{\sigma}^k \rangle$ in Algorithm \ref{alg_ADMM}, under Assumption \ref{assumption-1}. Moreover, every convergent subsequence of $\{\tilde{\sigma}^k\}$ converges globally to $\Omega$, the set of first-order stationary points of the problem \eqref{sphere-form}, and such a convergent subsequence exists.
\end{theorem}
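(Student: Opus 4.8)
The plan is to follow the now-standard template for nonconvex ADMM convergence: (i) establish a sufficient-decrease property for the augmented Lagrangian $L_\rho$ along the iterates, (ii) show that $L_\rho$ is bounded below, so that the successive differences of $L_\rho$ and, in particular, the successor differences $\|\tilde\sigma^{k+1}-\tilde\sigma^k\|$, $\|\sigma^{k+1}-\sigma^k\|$ and $\|y^{k+1}-y^k\|$ all tend to zero, and (iii) extract a convergent subsequence (compactness of $\mathcal M$ together with the boundedness estimates for $\sigma^k$ and $y^k$ one gets from the primal-dual link \eqref{prim-dual-link}), and verify that its limit satisfies the stationarity condition \eqref{1st-stationary-point}.

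First I would exploit the special structure of this splitting. The key simplification is the identity \eqref{prim-dual-link}, $y^k = C\tilde\sigma^k$ for $k\ge1$, which lets me rewrite the dual update and control $\|y^{k+1}-y^k\| = \|C(\tilde\sigma^{k+1}-\tilde\sigma^k)\| \le \|C\|\,\|\tilde\sigma^{k+1}-\tilde\sigma^k\|$ — this is where the condition $\rho \ge 2\|C\|$ enters, since the dual-variable increments, which normally spoil descent in nonconvex ADMM, are now dominated by the primal increments. For the descent estimate I would split the change in $L_\rho$ across the three updates: the $\tilde\sigma$-step \eqref{ADMM-1} decreases $L_\rho$ because $\tilde\sigma^{k+1}$ is a minimizer over $\mathcal M$ (and, after substituting $y^k=C\tilde\sigma^k$, the $\tilde\sigma$-subproblem has a $\rho$-strongly convex quadratic part in $\tilde\sigma$ once $\rho$ is large enough, giving a quadratic decrease $-\tfrac{\rho}{2}\|\tilde\sigma^{k+1}-\tilde\sigma^k\|^2$ type term — here the bound involving $\|C\|_\infty$ is needed to absorb the cross term $\langle C,(\tilde\sigma^{k+1}-\tilde\sigma^k)(\sigma^k)^\top\rangle$, using $\|\sigma_i^k\|$-type bounds); the $\sigma$-step \eqref{ADMM-2} decreases $L_\rho$ by exactly $\tfrac{\rho}{2}\|\sigma^{k+1}-\sigma^k\|^2$ since that subproblem is $\rho$-strongly convex and unconstrained; the $y$-step \eqref{ADMM-3} increases $L_\rho$ by $\tfrac1\rho\|y^{k+1}-y^k\|^2 \le \tfrac{\|C\|^2}{\rho}\|\tilde\sigma^{k+1}-\tilde\sigma^k\|^2$. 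Combining, for $\rho$ as in the hypothesis the net change is bounded above by $-c(\|\tilde\sigma^{k+1}-\tilde\sigma^k\|^2 + \|\sigma^{k+1}-\sigma^k\|^2)$ for some $c>0$.

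Next I would show $L_\rho$ is bounded below along the iterates. Using $y^k=C\tilde\sigma^k$, write $L_\rho(\tilde\sigma^k,\sigma^k,y^k) = \langle C,\tilde\sigma^k(\sigma^k)^\top\rangle + \langle C\tilde\sigma^k,\tilde\sigma^k-\sigma^k\rangle + \tfrac\rho2\|\tilde\sigma^k-\sigma^k\|^2$; completing the square in $\tilde\sigma^k-\sigma^k$ and using $\|\tilde\sigma^k\|_F^2 = n$ (rows are unit), one gets a lower bound of the form $\langle C\tilde\sigma^k,\tilde\sigma^k\rangle$ minus a constant, and $\langle C\tilde\sigma^k,\tilde\sigma^k\rangle \ge -\|C\|\,\|\tilde\sigma^k\|_F^2 = -n\|C\|$ is bounded below. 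A telescoping argument over the sufficient-decrease inequality then forces $\sum_k(\|\tilde\sigma^{k+1}-\tilde\sigma^k\|^2+\|\sigma^{k+1}-\sigma^k\|^2)<\infty$, hence both increments, and also $\|y^{k+1}-y^k\|$ and $\|\tilde\sigma^k-\sigma^k\| = \tfrac1\rho\|y^k-y^{k-1}\|$, go to zero; this also yields that $L_\rho$ and $\langle C\tilde\sigma^k,\tilde\sigma^k\rangle$ share the same limit, proving the first assertion. For the subsequential claim, $\{\tilde\sigma^k\}\subset\mathcal M$ which is compact, so a convergent subsequence exists; along it $\sigma^{k}\to\tilde\sigma^\infty$ too (since $\tilde\sigma^k-\sigma^k\to0$). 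Finally I pass to the limit in the optimality condition of the $\tilde\sigma$-subproblem \eqref{QP-1}: the first-order condition for $\tilde\sigma^{k+1}_i$ reads $-\big(\rho(\tilde\sigma_i^{k+1}-\sigma_i^k)+y_i^k+(C\sigma^k)_i^\top\big)\in \partial\mathcal I_{\|\cdot\|=1}(\tilde\sigma_i^{k+1})$; using $y_i^k=(C\tilde\sigma^k)_i^\top$ and the vanishing of the increments, the left side converges to $-(C\tilde\sigma^\infty)_i^\top$, and by closedness of the limiting subdifferential graph on $\mathcal M$ we get $-(C\tilde\sigma^\infty)_i^\top\in\partial\mathcal I_{\|\cdot\|=1}(\tilde\sigma_i^\infty)$, i.e. \eqref{1st-stationary-point}.

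The main obstacle I anticipate is the descent step: because the objective $\langle C,\tilde\sigma\sigma^\top\rangle$ couples $\tilde\sigma$ and $\sigma$ bilinearly and the $\tilde\sigma$-subproblem is constrained to the nonconvex manifold $\mathcal M$, one cannot directly invoke strong convexity there. The trick is to use $y^k=C\tilde\sigma^k$ to turn the linear term in the subproblem into one centered at $\tilde\sigma^k$, and then bound the remaining bilinear cross term $\langle C,(\tilde\sigma^{k+1}-\tilde\sigma^k)(\sigma^k-\tilde\sigma^k)^\top\rangle$ (and similar terms) using $\|C\|_\infty$ together with the row-normalization (which keeps $\|\tilde\sigma_i\|=1$ and hence controls relevant quantities) — getting the explicit constants $10\|C\|_\infty$ and $2\|C\|$ right is the technically delicate part, and also the place where Assumption~\ref{assumption-1} (guaranteeing the Normalize-Row step is well defined) is silently used. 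I would carry out that bookkeeping carefully, likely isolating it in a lemma on the one-step decrease of $L_\rho$ before assembling the three pieces.
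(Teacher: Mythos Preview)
Your overall architecture---sufficient decrease, lower bound, telescoping, compactness, and passing to the limit in the $\tilde\sigma$-optimality condition---matches the paper's exactly, and your treatment of the $\sigma$-step, the $y$-step (via $y^k=C\tilde\sigma^k$ and $\|y^{k+1}-y^k\|\le\|C\|\|\tilde\sigma^{k+1}-\tilde\sigma^k\|$), the lower bound on $L_\rho$, and the subsequential stationarity argument are all essentially what the paper does.

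The one place where your plan diverges from the paper, and where I think your proposed mechanism would not go through cleanly, is the quantitative descent for the $\tilde\sigma$-step. You propose to get the $-c\|\tilde\sigma^{k+1}-\tilde\sigma^k\|^2$ term from strong convexity of the smooth part and then ``absorb the cross term $\langle C,(\tilde\sigma^{k+1}-\tilde\sigma^k)(\sigma^k)^\top\rangle$'' using $\|C\|_\infty$. But strong convexity at a \emph{constrained} minimizer leaves you with the first-order term $\langle\nabla L_{\rho,i}(\tilde\sigma_i^{k+1}),\tilde\sigma_i^k-\tilde\sigma_i^{k+1}\rangle$, and this is not a ``bilinear cross term'' of the type you describe; trying to bound it crudely via $\|C\|_\infty$ and row norms will not yield a clean coefficient. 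The paper's argument is more geometric: because $\tilde\sigma_i^{k+1}=\gamma_i^k/\|\gamma_i^k\|$, the Euclidean gradient $\nabla L_{\rho,i}(\tilde\sigma_i^{k+1})=\rho(\tilde\sigma_i^{k+1}-\gamma_i^k)=\rho(1-\|\gamma_i^k\|)\tilde\sigma_i^{k+1}$ is \emph{radial}, and for unit vectors one has $\langle\tilde\sigma_i^{k+1},\tilde\sigma_i^k-\tilde\sigma_i^{k+1}\rangle=-\tfrac12\|\tilde\sigma_i^k-\tilde\sigma_i^{k+1}\|^2$. Combining these gives a descent coefficient of exactly $\tfrac{\rho\|\gamma_i^k\|}{2}$. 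The role of $\|C\|_\infty$ is then \emph{not} to absorb cross terms but to lower-bound $\|\gamma_i^k\|$ uniformly (expand $\gamma_i^k$ in terms of $\tilde\sigma^{k-1},\tilde\sigma^k$ via \eqref{sigma-update}--\eqref{prim-dual-link} and use $\|\tilde\sigma_j^\ell\|=1$); this is precisely where the constant $10\|C\|_\infty$ and Assumption~\ref{assumption-1} enter. So your plan is right in spirit, but the key lemma needs this sphere-geometry observation rather than the cross-term bookkeeping you sketch.
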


Theorem~\ref{theorem-global} establishes the global convergence of Algorithm~\ref{alg_ADMM}. The proof of Theorem \ref{theorem-global} is composed of three parts. We first prove monotonic non-increase of the augmented Lagrangian $L_{\rho}(\tilde{\sigma}^k, \sigma^k, y^k)$ in Lemma~\ref{lemma-monotonic-decrease}; we then derive a lower bound for $L_{\rho}(\tilde{\sigma}^k, \sigma^k, y^k), \forall k$ in Lemma~\ref{lemma-lower-obj}. Finally, we use Lemmas \ref{lemma-monotonic-decrease} and \ref{lemma-lower-obj} to prove convergence of $L_{\rho}(\tilde{\sigma}^k, \sigma^k, y^k)$ and $\lim\limits_{k \to \infty}\|\tilde{\sigma}^k - \sigma^k\| \to 0$, which implies convergence to the set of first-order stationary points.

\subsection{Part I: Monotonic non-increase of $L_{\rho}(\tilde{\sigma}, \sigma, y)$}
The key to proving Theorem \ref{theorem-global} is showing that $L_{\rho}(\tilde{\sigma}, \sigma, y)$ is monotonically non-increasing, i.e.
\begin{align}
	L_{\rho}(\tilde{\sigma}^k, \sigma^k, y^k) - L_{\rho}(\tilde{\sigma}^{k+1}, \sigma^{k+1}, y^{k+1}) \ge 0, \quad \forall k. \label{ineq-monotonic-decrease}
\end{align}
In the remainder of this section, we first give the iteration-wise change of $L_{\rho}(\tilde{\sigma}^k, \sigma^k, y^k)$ in Lemma \ref{lemma-decrease} and then prove that the change is non-negative if $\rho$ is properly lower bounded in Lemma \ref{lemma-monotonic-decrease}, which implies \eqref{ineq-monotonic-decrease} is valid.

\noindent\textbf{Change of $L_{\rho}(\tilde{\sigma}^k, \sigma^k, y^k)$}

From \eqref{normalize-step}, we have $\tilde{\sigma}_i^{k+1} = \frac{\gamma_i^k}{\|\gamma_i^k\|}, \forall i \in [n]$ which says $\gamma_i^k$ is aligned with $\tilde{\sigma}_i^{k+1}$ without necessarily being a unit vector itself. The change of  $L_{\rho}(\tilde{\sigma}^k, \sigma^k, y^k)$ per iteration is then summarized in Lemma~\ref{lemma-decrease} below.
\begin{lemma}\label{lemma-decrease}
	For any iteration $k$, we have
	\begin{align}
		& L_{\rho}(\tilde{\sigma}^k, \sigma^k, y^k) - L_{\rho}(\tilde{\sigma}^{k+1}, \sigma^{k+1}, y^{k+1}) \notag \\
		\ge & \left(\frac{\rho \min_{i \in [n]} \{\|\gamma_i^k\|\} }{2} - \frac{\|C\|^2}{\rho} \right)\|\tilde{\sigma}^{k+1} - \tilde{\sigma}^{k}\|^2_F + \frac{\rho}{2}\|\sigma^{k+1} - \sigma^k\|^2_F. \label{al-sequence-diff}
	\end{align}
\end{lemma}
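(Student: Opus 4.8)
The plan is to use the standard three‑block telescoping decomposition of the Lagrangian change and lower bound each block separately. Write
\[
L_{\rho}(\tilde{\sigma}^k,\sigma^k,y^k)-L_{\rho}(\tilde{\sigma}^{k+1},\sigma^{k+1},y^{k+1})=D_{\tilde{\sigma}}+D_{\sigma}+D_{y},
\]
where $D_{\tilde{\sigma}}:=L_{\rho}(\tilde{\sigma}^k,\sigma^k,y^k)-L_{\rho}(\tilde{\sigma}^{k+1},\sigma^k,y^k)$ comes from the $\tilde{\sigma}$‑update \eqref{ADMM-1}, $D_{\sigma}:=L_{\rho}(\tilde{\sigma}^{k+1},\sigma^k,y^k)-L_{\rho}(\tilde{\sigma}^{k+1},\sigma^{k+1},y^k)$ from the $\sigma$‑update \eqref{ADMM-2}, and $D_{y}:=L_{\rho}(\tilde{\sigma}^{k+1},\sigma^{k+1},y^k)-L_{\rho}(\tilde{\sigma}^{k+1},\sigma^{k+1},y^{k+1})$ from the dual update \eqref{ADMM-3}. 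I would bound $D_{\sigma}$ and $D_{y}$ by routine arguments and spend the real effort on $D_{\tilde{\sigma}}$.

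For $D_{y}$: only the term $\langle y,\tilde{\sigma}-\sigma\rangle$ depends on $y$, so $D_{y}=\langle y^k-y^{k+1},\tilde{\sigma}^{k+1}-\sigma^{k+1}\rangle$; substituting $\tilde{\sigma}^{k+1}-\sigma^{k+1}=\frac{1}{\rho}(y^{k+1}-y^k)$ from \eqref{ADMM-3} gives $D_{y}=-\frac{1}{\rho}\|y^{k+1}-y^k\|_F^2$. This is the only possibly negative piece, and it is tamed by the primal–dual identity \eqref{prim-dual-link}: since the initialization $y^0=C\tilde{\sigma}^0$ makes $y^k=C\tilde{\sigma}^k$ hold for every $k\ge 0$, we have $y^{k+1}-y^k=C(\tilde{\sigma}^{k+1}-\tilde{\sigma}^k)$ and hence $D_{y}\ge-\frac{\|C\|^2}{\rho}\|\tilde{\sigma}^{k+1}-\tilde{\sigma}^k\|_F^2$. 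For $D_{\sigma}$: the map $\sigma\mapsto L_{\rho}(\tilde{\sigma}^{k+1},\sigma,y^k)$ is quadratic with the only second‑order term $\frac{\rho}{2}\|\tilde{\sigma}^{k+1}-\sigma\|_F^2$, so it is $\rho$‑strongly convex, and since $\sigma^{k+1}$ is its unconstrained minimizer, $D_{\sigma}\ge\frac{\rho}{2}\|\sigma^{k+1}-\sigma^k\|_F^2$.

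The term $D_{\tilde{\sigma}}$ is the crux and the place where the unit‑norm geometry is essential. The function $L_{\rho}(\cdot,\sigma^k,y^k)$ is separable across rows, and on $\{\|\tilde{\sigma}_i\|=1\}$ the quadratic $\frac{\rho}{2}\|\tilde{\sigma}_i-\sigma_i^k\|^2$ contributes the constant $\frac{\rho}{2}\|\tilde{\sigma}_i\|^2=\frac{\rho}{2}$, so each row subproblem reduces (up to constants) to minimizing the linear functional $-\rho\langle\gamma_i^k,\tilde{\sigma}_i\rangle$ over the unit sphere, with minimizer $\tilde{\sigma}_i^{k+1}=\gamma_i^k/\|\gamma_i^k\|$ as in \eqref{normalize-step}. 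Hence $D_{\tilde{\sigma}}=\sum_{i=1}^n\rho\langle\gamma_i^k,\tilde{\sigma}_i^{k+1}-\tilde{\sigma}_i^k\rangle$; using $\gamma_i^k=\|\gamma_i^k\|\,\tilde{\sigma}_i^{k+1}$ together with the elementary identity $1-\langle\tilde{\sigma}_i^{k+1},\tilde{\sigma}_i^k\rangle=\frac{1}{2}\|\tilde{\sigma}_i^{k+1}-\tilde{\sigma}_i^k\|^2$ for unit vectors, this becomes $D_{\tilde{\sigma}}=\sum_{i=1}^n\frac{\rho\|\gamma_i^k\|}{2}\|\tilde{\sigma}_i^{k+1}-\tilde{\sigma}_i^k\|^2\ge\frac{\rho\min_{i\in[n]}\|\gamma_i^k\|}{2}\|\tilde{\sigma}^{k+1}-\tilde{\sigma}^k\|_F^2$ (the indicator terms vanish since $\tilde{\sigma}^k,\tilde{\sigma}^{k+1}\in\mathcal{M}$). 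Summing the three bounds yields \eqref{al-sequence-diff}.

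I expect the main obstacle to be exactly $D_{\tilde{\sigma}}$: because $\mathcal{M}$ is nonconvex one cannot simply invoke strong convexity as for $D_{\sigma}$, and a naive first‑order optimality inequality would only give a term linear in $\|\tilde{\sigma}^{k+1}-\tilde{\sigma}^k\|_F$, which is too weak. The fix is to exploit the explicit normalization step and spherical geometry to recover a genuinely quadratic descent, at the price that its coefficient degrades to the possibly small quantity $\min_{i}\|\gamma_i^k\|$ — which is precisely why Assumption~\ref{assumption-1} (later quantified in Lemma~\ref{lemma-gamma-bound}) must be in force for the bound to be useful downstream.
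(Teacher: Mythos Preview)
Your proof is correct and follows essentially the same three–block telescoping decomposition as the paper, with the same treatment of the $\sigma$– and $y$–updates and the same spherical identity $1-\langle\tilde{\sigma}_i^{k+1},\tilde{\sigma}_i^k\rangle=\tfrac12\|\tilde{\sigma}_i^{k+1}-\tilde{\sigma}_i^k\|^2$ driving the $\tilde{\sigma}$–block. The only cosmetic difference is that for $D_{\tilde{\sigma}}$ you linearize the objective directly on the sphere (observing that $\tfrac{\rho}{2}\|\tilde{\sigma}_i\|^2$ is constant there), whereas the paper reaches the same identity via the $\rho$–strong convexity expansion of $L_{\rho,i}$ around $\tilde{\sigma}_i^{k+1}$; both routes yield the exact equality $D_{\tilde{\sigma}}=\sum_i\tfrac{\rho\|\gamma_i^k\|}{2}\|\tilde{\sigma}_i^{k+1}-\tilde{\sigma}_i^k\|^2$.
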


\begin{proof}[Proof of Lemma \ref{lemma-decrease}]
	The proof of \eqref{al-sequence-diff} can be decomposed into three parts
	\begin{align*}
		& L_{\rho}(\tilde{\sigma}^k, \sigma^k, y^k) - L_{\rho}(\tilde{\sigma}^{k+1}, \sigma^{k+1}, y^{k+1})\\
		= & \underbrace{L_{\rho}(\tilde{\sigma}^k, \sigma^k, y^k) - L_{\rho}(\tilde{\sigma}^{k+1}, \sigma^{k}, y^{k})}_{(A)} + \underbrace{L_{\rho}(\tilde{\sigma}^{k+1}, \sigma^k, y^k) - L_{\rho}(\tilde{\sigma}^{k+1}, \sigma^{k+1}, y^{k})}_{(B)} \\
		& + \underbrace{L_{\rho}(\tilde{\sigma}^{k+1}, \sigma^{k+1}, y^k) - L_{\rho}(\tilde{\sigma}^{k+1}, \sigma^{k+1}, y^{k+1})}_{(C)}.
	\end{align*}
	For part $(A)$, we consider the minimization over $\tilde{\sigma}$. The difficulty is the constraint $\|\tilde{\sigma}_i\| = 1, \forall i \in[n]$, which makes the minimization over $\tilde{\sigma}$ nonconvex. We define $L_{\rho,i}(\tilde{\sigma}_i, \sigma, y)$ as
	\begin{align}
		L_{\rho, i}(\tilde{\sigma}_i, \sigma, y) := \frac{\rho}{2} \|\tilde{\sigma}_i - \sigma_i\|^2 + {y_i}^\top (\tilde{\sigma}_i - \sigma_i) + \sum_{j=1}^n C_{i,j} \langle {\sigma_j}, \tilde{\sigma}_i \rangle. \label{min-tilde-x}
	\end{align}
	Note that $L_{\rho, i}(\tilde{\sigma}_i, \sigma^k, y^k)$ in \eqref{min-tilde-x} is differentiable and $\rho$-strongly convex w.r.t. $\tilde{\sigma}_i$. Hence, we have
	\begin{align}
		& L_{\rho, i}(\tilde{\sigma}_i^k, \sigma^k, y^k) \notag\\
		\ge & L_{\rho, i}(\tilde{\sigma}_i^{k+1}, \sigma^k, y^k) + \langle \nabla L_{\rho, i}(\tilde{\sigma}_i^{k+1}, \sigma^k, y^k), \tilde{\sigma}_i^k - \tilde{\sigma}_i^{k+1}\rangle + \frac{\rho}{2}\|\tilde{\sigma}_i^k - \tilde{\sigma}_i^{k+1}\|^2, \notag\\
		= & L_{\rho, i}(\tilde{\sigma}_i^{k+1}, \sigma^k, y^k) + \rho \langle \tilde{\sigma}_i^{k+1} - \gamma_i^k, \tilde{\sigma}_i^k - \tilde{\sigma}_i^{k+1}\rangle + \frac{\rho}{2}\|\tilde{\sigma}_i^k - \tilde{\sigma}_i^{k+1}\|^2, \notag\\
		= & L_{\rho, i}(\tilde{\sigma}_i^{k+1}, \sigma^k, y^k) + \rho (1- \|\gamma_i^k\|) \langle \tilde{\sigma}_i^{k+1}, \tilde{\sigma}_i^k - \tilde{\sigma}_i^{k+1}\rangle + \frac{\rho}{2}\|\tilde{\sigma}_i^k - \tilde{\sigma}_i^{k+1}\|^2, \label{lemma-int-1}
	\end{align}
	where both equalities above come from the definition of $\gamma^k$ in \eqref{gamma-update}. Noting that
	\begin{align*}
		\langle \tilde{\sigma}_i^{k+1}, \tilde{\sigma}_i^k - \tilde{\sigma}_i^{k+1}\rangle & = \langle \tilde{\sigma}_i^{k+1}, \tilde{\sigma}_i^k \rangle - 1 = \langle \tilde{\sigma}_i^{k+1}, \tilde{\sigma}_i^k \rangle - \frac{1}{2}(\|\tilde{\sigma}_i^{k+1}\|^2 + \|\tilde{\sigma}_i^{k}\|^2)\\
		& = - \frac{1}{2}\|\tilde{\sigma}_i^{k+1} - \tilde{\sigma}_i^{k}\|^2,
	\end{align*}
	which uses $\|\tilde{\sigma}_i^{k+1}\| = \|\tilde{\sigma}_i^{k}\| = 1$. Then, \eqref{lemma-int-1} becomes
	\begin{align}
		L_{\rho, i}(\tilde{\sigma}_i^k, \sigma^k, y^k) \ge L_{\rho, i}(\tilde{\sigma}_i^{k+1}, \sigma^k, y^k) + \frac{\rho \|\gamma_i^k\|}{2} \cdot \|\tilde{\sigma}_i^k - \tilde{\sigma}_i^{k+1}\|^2. \label{decrease-1}
	\end{align}
	For part $(B)$, we can establish monotonic non-increase when updating $\sigma$:
	\begin{align}
		& L_{\rho} (\tilde{\sigma}^{k+1}, \sigma^k, y^k) - L_{\rho} (\tilde{\sigma}^{k+1}, \sigma^{k+1}, y^k) \notag\\
		= & \langle \tilde{\sigma}^{k+1}, C(\sigma^k - \sigma^{k+1})\rangle - \langle y^k, \sigma^k - \sigma^{k+1} \rangle + \frac{\rho}{2}\|\tilde{\sigma}^{k+1} - \sigma^k\|^2_F \notag\\
		&- \frac{\rho}{2}\|\tilde{\sigma}^{k+1} - \sigma^{k+1}\|^2_F \notag\\
		= & \langle C\tilde{\sigma}^{k+1} - y^k, \sigma^k - \sigma^{k+1} \rangle + \frac{\rho}{2}\|\tilde{\sigma}^{k+1} - \sigma^k\|^2_F - \frac{\rho}{2}\|\tilde{\sigma}^{k+1} - \sigma^{k+1}\|^2_F \notag\\
		\overset{\eqref{sigma-update}}{=} & \rho \langle \tilde{\sigma}^{k+1} - \sigma^{k+1}, \sigma^k - \sigma^{k+1} \rangle + \frac{\rho}{2}\|\tilde{\sigma}^{k+1} - \sigma^k\|^2_F - \frac{\rho}{2}\|\tilde{\sigma}^{k+1} - \sigma^{k+1}\|^2_F \notag\\
		= & \frac{\rho}{2}\|\tilde{\sigma}^{k+1} - \sigma^k\|^2_F - \frac{\rho}{2} \langle \tilde{\sigma}^{k+1} - \sigma^{k+1}, \tilde{\sigma}^{k+1} - \sigma^k + \sigma^{k+1} - \sigma^k \rangle \notag\\
		= & \frac{\rho}{2}\|\tilde{\sigma}^{k+1} - \sigma^k\|^2_F - \frac{\rho}{2}\left(\|\tilde{\sigma}^{k+1} - \sigma^k\|^2_F - \|\sigma^{k+1} - \sigma^k\|^2_F \right) \notag\\
		= & \frac{\rho}{2}\|\sigma^{k+1} - \sigma^k\|^2_F. \label{decrease-2}
	\end{align}
	Finally, for part $(C)$ we have
	\begin{align}
		L_{\rho} (\tilde{\sigma}^{k+1}, \sigma^{k+1}, y^k) - L_{\rho}  (\tilde{\sigma}^{k+1}, \sigma^{k+1}, y^{k+1}) & = \langle y^k - y^{k+1}, \tilde{\sigma}^{k+1} - \sigma^{k+1} \rangle \notag\\
		& = -\frac{1}{\rho}\|y^{k+1} - y^k\|^2_F,
	\end{align}
	where the last equality comes from the update rule for $y^k$, i.e. step 4 in Algorithm \ref{alg_ADMM}. Furthermore, we can obtain
	\begin{align}
		L_{\rho} (\tilde{\sigma}^{k+1}, \sigma^{k+1}, y^k) - L_{\rho}  (\tilde{\sigma}^{k+1}, \sigma^{k+1}, y^{k+1}) \ge -\frac{1}{\rho}\|C\|^2 \cdot \|(\tilde{\sigma}^{k+1} - \tilde{\sigma}^k)\|^2_F \label{decrease-3}
	\end{align}
	from \eqref{prim-dual-link}. Combining \eqref{decrease-1}, \eqref{decrease-2} and \eqref{decrease-3}, we finally obtain \eqref{al-sequence-diff}.
\end{proof}
\vspace{3mm}
\noindent \textbf{Choice of $\rho$}\\
From Lemma~\ref{lemma-decrease}, it is sufficient to show that $ L_{\rho}(\tilde{\sigma}^k, \sigma^k, y^k)$ is monotonically non-increasing in \eqref{ineq-monotonic-decrease} if the coefficient $\frac{\rho \min_{i \in [n]} \{\|\gamma_i^k\|\} }{2} -~\frac{\|C\|^2}{\rho} > 0$ while $\|\gamma_i^k\|$ is dependent on the choice of row $i$ and iteration $k$. We argue that by choosing $\rho$ properly, $\|\gamma_i^k\|$ can be uniformly lower-bounded in Lemma \ref{lemma-gamma-bound} and then $L_{\rho}(\tilde{\sigma}^k, \sigma^k, y^k)$ always satisfies the monotonic non-increase condition \eqref{ineq-monotonic-decrease} in Lemma \ref{lemma-monotonic-decrease}.

From \eqref{gamma-update}, we expect that the magnitude of $\|\gamma_i^k\|$ will depend strongly on the choice of $\rho$, and $\|\gamma_i^k\| \to 1, \ \forall i \in [n]$ if $\rho$ is set to be sufficiently large as $\sigma \to \tilde{\sigma}$ with $\|\tilde{\sigma}_i\| = 1, \ \forall i \in [n]$. In Lemma \ref{lemma-gamma-bound}, we provide a lower bound for $\rho$ such that $\|\gamma_i^k\|, \forall i \in [n]$ is not too small for any $k$.
\begin{lemma}\label{lemma-gamma-bound}
	Suppose $\rho \ge \alpha \|C\|_{\infty}, \alpha>0$, then
	\begin{align*}
		\|\gamma_i^k\| \ge 1 - \frac{4}{\alpha} - \frac{2}{\alpha^2}, \ \forall i, \ \forall k \ge 2.
	\end{align*}
\end{lemma}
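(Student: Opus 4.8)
The plan is to show that, once $\rho$ is large, the matrix $\gamma^k$ defined in \eqref{gamma-update} is only a small perturbation of $\tilde{\sigma}^k$, all of whose rows are unit vectors, and then to conclude by the reverse triangle inequality applied row by row.

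First I would eliminate the dual variable using the primal--dual identity \eqref{prim-dual-link}, which gives $y^k = C\tilde{\sigma}^k$ for every $k \ge 1$. Substituting into \eqref{gamma-update} yields $\gamma^k = \sigma^k - \frac{1}{\rho} C\sigma^k - \frac{1}{\rho} C\tilde{\sigma}^k$. Next I would remove $\sigma^k$ using the update \eqref{sigma-update}, namely $\sigma^k = \tilde{\sigma}^k + \frac{1}{\rho}(y^{k-1} - C\tilde{\sigma}^k)$, and invoke \eqref{prim-dual-link} once more at index $k-1$ --- this is the only place the hypothesis $k \ge 2$ is needed --- to obtain $\sigma^k = \tilde{\sigma}^k + \frac{1}{\rho} C(\tilde{\sigma}^{k-1} - \tilde{\sigma}^k)$. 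Collecting terms then leaves the clean identity
\begin{align*}
	\gamma^k - \tilde{\sigma}^k = -\frac{2}{\rho} C\tilde{\sigma}^k + \frac{1}{\rho} C(\tilde{\sigma}^{k-1} - \tilde{\sigma}^k) - \frac{1}{\rho^2} C^2(\tilde{\sigma}^{k-1} - \tilde{\sigma}^k),
\end{align*}
so that $\gamma^k$ differs from $\tilde{\sigma}^k$ only by terms carrying at least one factor $C/\rho$.

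The second step is to bound the $i$-th row of each term on the right. The elementary fact to use is that the $i$-th row of $CV$ equals $\sum_{j} C_{ij} V_{j,\cdot}$ for any $V \in \mathbb{R}^{n \times r}$, hence $\|(CV)_{i,\cdot}\| \le \sum_j |C_{ij}|\, \|V_{j,\cdot}\| \le \|C\|_\infty \max_j \|V_{j,\cdot}\|$. With $V = \tilde{\sigma}^k$ (unit rows) this is at most $\|C\|_\infty$; with $V = \tilde{\sigma}^{k-1} - \tilde{\sigma}^k$ (rows of norm at most $2$) it is at most $2\|C\|_\infty$; and iterating once more bounds the $C^2$ term by $2\|C\|_\infty^2$. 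Combining, $\|\gamma_i^k - \tilde{\sigma}_i^k\| \le \frac{4\|C\|_\infty}{\rho} + \frac{2\|C\|_\infty^2}{\rho^2}$, and since $\|\tilde{\sigma}_i^k\| = 1$ the reverse triangle inequality gives $\|\gamma_i^k\| \ge 1 - \frac{4\|C\|_\infty}{\rho} - \frac{2\|C\|_\infty^2}{\rho^2}$; substituting $\rho \ge \alpha\|C\|_\infty$ then finishes the proof.

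There is no genuinely hard step here: the content is entirely the algebraic reduction to the displayed identity for $\gamma^k - \tilde{\sigma}^k$. The two things to be careful about are the index bookkeeping (the restriction $k \ge 2$ comes precisely from needing $y^{k-1} = C\tilde{\sigma}^{k-1}$) and working with the operator $\infty$-norm acting on the rows --- so that the unit-norm structure of $\mathcal{M}$ is exploited --- rather than with the spectral norm $\|C\|$.
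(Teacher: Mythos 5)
Your argument is correct and is essentially the paper's own proof: you eliminate $y^k$ via \eqref{prim-dual-link}, substitute $\sigma^k = \tilde{\sigma}^k + \frac{1}{\rho}C(\tilde{\sigma}^{k-1}-\tilde{\sigma}^k)$ (which is where $k\ge 2$ enters), and bound each row using $\|(CV)_{i,\cdot}\| \le \sum_j |C_{ij}|\,\|V_{j,\cdot}\|$ together with the unit-row structure of $\mathcal{M}$, exactly as the paper does in its row-by-row expansion. The only difference is cosmetic (matrix notation and an explicit reverse triangle inequality versus the paper's componentwise sums), and your constants $\frac{4}{\alpha}+\frac{2}{\alpha^2}$ match.
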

\begin{proof}[Proof of Lemma \ref{lemma-gamma-bound}]
	We note that, $\forall i \in [n]$,~$\forall k \ge 2$,
	\begin{align*}
		\sigma_i^{k+1} = \tilde{\sigma}_i^{k+1} + \frac{1}{\rho}(y_i^k - \sum_{j=1}^n C_{i,j}\tilde{\sigma}^{k+1}_j) \overset{\eqref{prim-dual-link}}{=}  \tilde{\sigma}_i^{k+1} + \frac{1}{\rho}\sum_{j=1}^n C_{i,j} ( \tilde{\sigma}^k_j - \tilde{\sigma}^{k+1}_j).
	\end{align*}
	Substituting into \eqref{gamma-update}, we can write $\gamma_i^k$ in terms of $\tilde{\sigma}$ as
	\begin{align*}
		\gamma_i^k & = \sigma_i^k - \frac{1}{\rho} (y_i^k +  \sum_{j=1}^n C_{i,j} \sigma^{k}_j) \overset{\eqref{prim-dual-link}}{=} \sigma_i^k - \frac{1}{\rho} \sum_{j=1}^n C_{i,j}(\tilde{\sigma}^{k}_j + \sigma^{k}_j)\\
		& = \tilde{\sigma}_i^{k} + \frac{1}{\rho}\sum_{j=1}^n C_{i,j} ( \tilde{\sigma}^{k-1}_j - \tilde{\sigma}^{k}_j)  - \frac{1}{\rho} \sum_{j=1}^n C_{i,j}\tilde{\sigma}^{k}_j\\
		& \quad - \frac{1}{\rho} \sum_{j=1}^n C_{i,j} \left[\tilde{\sigma}_j^{k} + \frac{1}{\rho}\sum_{l=1}^n C_{j,l} ( \tilde{\sigma}^{k-1}_l - \tilde{\sigma}^{k}_l)\right],
	\end{align*}
	where the last equality is based on \eqref{sigma-update} and \eqref{prim-dual-link}, the update of $\sigma^k$. Since $\|\tilde{\sigma}^k_i\| = 1, \ \forall i\in [n], \forall k \ge 1$ from step 2 of Algorithm~\ref{alg_ADMM}, we can bound the norm of $\|\gamma_i^k\|$ by
	\begin{align*}
		\begin{aligned}
			\|\gamma_i^k\| & \ge 1- \frac{4}{\rho}\sum_{j=1}^n \vert C_{i,j} \vert - \frac{1}{\rho}\sum_{j=1}^n \vert C_{i,j} \vert \cdot \left(\frac{2}{\rho}\sum_{l=1}^n \vert C_{j,l} \vert \right)\\
			& \ge 1- \frac{4}{\rho}\sum_{j=1}^n \vert C_{i,j} \vert - \frac{1}{\rho}\sum_{j=1}^n \vert C_{i,j} \vert \cdot \frac{2}{\alpha}\\
			& \ge 1 - \frac{4}{\alpha} - \frac{2}{\alpha^2},
		\end{aligned}
	\end{align*}
	where the second inequality relies on $\rho \ge \alpha \|C\|_{\infty}$.
\end{proof}
Note that $C$ is in the linear objective function and can be scaled such that the parameter $\rho$ won't be too large to satisfy the condition in Lemma \ref{lemma-gamma-bound}. Meanwhile, Assumption~\ref{assumption-1} will be satisfied automatically when $k \ge 2$, as long as $1 - \frac{4}{\alpha} - \frac{2}{\alpha^2} > 0$.

Based on \eqref{al-sequence-diff} and Lemma \ref{lemma-gamma-bound}, we establish that $L_{\rho}(\tilde{\sigma}, \sigma, y)$  is monotonically non-increasing given a proper $\rho$ in Lemma \ref{lemma-monotonic-decrease}.
\begin{lemma} \label{lemma-monotonic-decrease}
	If we set $\rho \ge \max \left\{\alpha \|C\|_{\infty}, \beta \|C\|\right\}$ and $\alpha>0, \beta>0$ satisfies $\kappa := \frac{(\alpha^2-4\alpha-2)\beta}{2\alpha^2} - \frac{1}{\beta} > 0$, then the augmented Lagrangian sequence $L_{\rho}(\tilde{\sigma}^k, \sigma^k, y^k)$ is monotonically non-increasing $\forall k \ge 2$, and satisfies
	\begin{align*}
		L_{\rho}(\tilde{\sigma}^k, \sigma^k, y^k) - L_{\rho}(\tilde{\sigma}^{k+1}, \sigma^{k+1}, y^{k+1})
		\ge & \kappa \|C\| \cdot \|\tilde{\sigma}^{k+1} - \tilde{\sigma}^{k}\|^2_F + \frac{\rho}{2}\|\sigma^{k+1} - \sigma^k\|^2_F.
	\end{align*}
\end{lemma}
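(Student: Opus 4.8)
The plan is to obtain this lemma as a direct bookkeeping corollary of Lemma~\ref{lemma-decrease} and Lemma~\ref{lemma-gamma-bound}: substitute the uniform lower bound on $\min_{i}\|\gamma_i^k\|$ into the per-iteration descent estimate \eqref{al-sequence-diff}, and then use the condition $\rho \ge \beta\|C\|$ to collapse the resulting coefficient into a single multiple of $\|C\|$. Because Lemma~\ref{lemma-gamma-bound} is only valid for $k \ge 2$, the conclusion will be stated for $k \ge 2$ as well.

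First I would apply Lemma~\ref{lemma-gamma-bound} with the given $\alpha$: since $\rho \ge \alpha\|C\|_\infty$, we get $\min_{i\in[n]}\|\gamma_i^k\| \ge 1 - \tfrac{4}{\alpha} - \tfrac{2}{\alpha^2} = \tfrac{\alpha^2 - 4\alpha - 2}{\alpha^2}$ for all $k \ge 2$. Plugging this into the coefficient of $\|\tilde\sigma^{k+1} - \tilde\sigma^k\|_F^2$ in \eqref{al-sequence-diff} lower-bounds that coefficient by
\[
\frac{\rho}{2}\cdot\frac{\alpha^2 - 4\alpha - 2}{\alpha^2} - \frac{\|C\|^2}{\rho},
\]
while the coefficient $\tfrac{\rho}{2}$ of $\|\sigma^{k+1}-\sigma^k\|_F^2$ is untouched. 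Next I would invoke $\rho \ge \beta\|C\|$. Here one first notes that the hypothesis $\kappa > 0$ forces $\alpha^2 - 4\alpha - 2 > 0$, since otherwise the first term of $\kappa$ is nonpositive and $\kappa \le -\tfrac{1}{\beta} < 0$. With $\alpha^2 - 4\alpha - 2 > 0$, the map $\rho \mapsto \tfrac{\rho}{2}\cdot\tfrac{\alpha^2-4\alpha-2}{\alpha^2}$ is increasing, so $\rho \ge \beta\|C\|$ gives $\tfrac{\rho}{2}\cdot\tfrac{\alpha^2-4\alpha-2}{\alpha^2} \ge \tfrac{(\alpha^2-4\alpha-2)\beta}{2\alpha^2}\|C\|$, and $\rho \ge \beta\|C\| > 0$ gives $\tfrac{\|C\|^2}{\rho} \le \tfrac{\|C\|}{\beta}$. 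Combining these,
\[
\frac{\rho}{2}\cdot\frac{\alpha^2-4\alpha-2}{\alpha^2} - \frac{\|C\|^2}{\rho} \ \ge\ \left(\frac{(\alpha^2-4\alpha-2)\beta}{2\alpha^2} - \frac{1}{\beta}\right)\|C\| \ =\ \kappa\|C\|.
\]
Substituting back into Lemma~\ref{lemma-decrease} yields precisely the claimed inequality, and since $\kappa > 0$ while both squared Frobenius-norm terms are nonnegative, the right-hand side is nonnegative, which establishes the monotone non-increase \eqref{ineq-monotonic-decrease} for all $k \ge 2$.

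There is no genuine obstacle in this argument; it is an arithmetic consolidation of the two preceding lemmas. The only points that need care are: (i) carrying through the restriction $k \ge 2$, inherited from Lemma~\ref{lemma-gamma-bound}; and (ii) recognizing that the single hypothesis $\kappa > 0$ is exactly what simultaneously licenses the monotonicity-in-$\rho$ step (via $\alpha^2 - 4\alpha - 2 > 0$) and guarantees the final positivity of the descent constant, so that no further conditions on $\alpha,\beta$ are required.
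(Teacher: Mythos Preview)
Your proposal is correct and follows essentially the same approach as the paper: apply Lemma~\ref{lemma-gamma-bound} to lower-bound $\min_i\|\gamma_i^k\|$, substitute into \eqref{al-sequence-diff}, and then use $\rho \ge \beta\|C\|$ to collapse the coefficient to $\kappa\|C\|$. Your write-up is in fact more careful than the paper's, which jumps directly to the final inequality without explicitly noting that $\kappa > 0$ forces $\alpha^2 - 4\alpha - 2 > 0$ (the point needed to make the first term monotone in $\rho$).
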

\begin{proof}[Proof of Lemma \ref{lemma-monotonic-decrease}]
	First, Lemma \ref{lemma-gamma-bound} is satisfied under the condition $\rho \ge \alpha\|C\|_{\infty}$. Then, combining Lemma \ref{lemma-gamma-bound} with \eqref{al-sequence-diff} we obtain
	\begin{align*}
		& L_{\rho}(\tilde{\sigma}^k, \sigma^k, y^k) - L_{\rho}(\tilde{\sigma}^{k+1}, \sigma^{k+1}, y^{k+1}) \\
		\ge & \left[\frac{\alpha^2  - 4\alpha - 2}{2\alpha^2}  \cdot  \rho - \frac{\|C\|^2}{\rho} \right]  	\|\tilde{\sigma}^{k+1}  -   \tilde{\sigma}^{k}\|^2_F +  \frac{\rho}{2}\|\sigma^{k+1} - \sigma^k\|^2_F\\
		\ge & \kappa \|C\| \cdot \|\tilde{\sigma}^{k+1} - \tilde{\sigma}^{k}\|^2_F + \frac{\rho}{2}\|\sigma^{k+1} - 	\sigma^k\|^2_F.
	\end{align*}
\end{proof}
\subsection{Part II: Lower bound of $L_{\rho} (\tilde{\sigma}^k, \sigma^k, y^k)$}
We next establish a lower bound on $\lim\limits_{k \to \infty} L_{\rho} (\tilde{\sigma}^k, \sigma^k, y^k)$ in the following lemma:
\begin{lemma} \label{lemma-lower-obj}
	For $k \ge 2$, we have
	\begin{align}
		L_{\rho}(\tilde{\sigma}^k, \sigma^k, y^k) \ge -n\|C\|_{\infty}. \label{equivalent-aug-lagrangian}
	\end{align}
\end{lemma}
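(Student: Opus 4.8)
The plan is to exploit the primal--dual link \eqref{prim-dual-link}, i.e.\ $y^k = C\tilde\sigma^k$ for $k\ge 1$, together with the fact that each iterate $\tilde\sigma^k$ is produced by the Normalize-Row step \eqref{normalize-step} and hence lies in $\mathcal M$, so that every indicator term $\mathcal I_{\|\tilde\sigma_i\|=1}(\tilde\sigma_i^k)$ in $L_\rho$ is zero. Substituting $y^k = C\tilde\sigma^k$ into the definition of $L_\rho$, the dual term becomes $\langle C\tilde\sigma^k,\tilde\sigma^k-\sigma^k\rangle = \langle C\tilde\sigma^k,\tilde\sigma^k\rangle - \langle C\tilde\sigma^k,\sigma^k\rangle$, and since $C$ is symmetric the cross term satisfies $\langle C,\tilde\sigma^k(\sigma^k)^\top\rangle = \langle C\tilde\sigma^k,\sigma^k\rangle$; the two $\langle C\tilde\sigma^k,\sigma^k\rangle$ pieces cancel, leaving the identity
\[
L_\rho(\tilde\sigma^k,\sigma^k,y^k) = \langle C\tilde\sigma^k,\tilde\sigma^k\rangle + \frac{\rho}{2}\|\tilde\sigma^k - \sigma^k\|_F^2 \ \ge\ \langle C\tilde\sigma^k,\tilde\sigma^k\rangle,
\]
where the inequality drops the non-negative quadratic penalty. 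It then remains only to lower-bound $\langle C\tilde\sigma^k,\tilde\sigma^k\rangle = \langle C, X\rangle$ with $X := \tilde\sigma^k(\tilde\sigma^k)^\top$.

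For this last step I would note that $X\succeq 0$ with $X_{ii} = \|\tilde\sigma_i^k\|^2 = 1$ for all $i$, so by positive semidefiniteness (equivalently Cauchy--Schwarz) each entry obeys $|X_{ij}|\le \sqrt{X_{ii}X_{jj}} = 1$. Therefore
\[
\langle C, X\rangle = \sum_{i,j} C_{ij} X_{ij} \ \ge\ -\sum_{i,j}|C_{ij}| = -\sum_{i=1}^n\Big(\sum_{j=1}^n |C_{ij}|\Big) \ \ge\ -n\max_{1\le i\le n}\sum_{j=1}^n |C_{ij}| = -n\|C\|_\infty,
\]
and combining with the identity above gives $L_\rho(\tilde\sigma^k,\sigma^k,y^k)\ge -n\|C\|_\infty$, which is \eqref{equivalent-aug-lagrangian}.

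I do not expect any real obstacle here: the argument is essentially a one-line algebraic simplification of $L_\rho$ using \eqref{prim-dual-link}, followed by an elementary entrywise bound on a correlation-type PSD matrix. The only points needing mild care are using symmetry of $C$ to justify the cancellation of the cross terms, and observing that although the derivation actually holds for every $k\ge 1$, the statement is phrased for $k\ge 2$ so as to match Lemmas \ref{lemma-gamma-bound} and \ref{lemma-monotonic-decrease} when they are assembled into the proof of Theorem \ref{theorem-global}.
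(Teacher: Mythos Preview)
Your proposal is correct and follows essentially the same route as the paper: substitute the primal--dual link $y^k=C\tilde\sigma^k$ into $L_\rho$ to obtain the identity $L_\rho(\tilde\sigma^k,\sigma^k,y^k)=\langle C\tilde\sigma^k,\tilde\sigma^k\rangle+\tfrac{\rho}{2}\|\tilde\sigma^k-\sigma^k\|_F^2$, drop the nonnegative penalty, and then bound $\sum_{i,j}C_{ij}\langle\tilde\sigma_i^k,\tilde\sigma_j^k\rangle\ge -\sum_{i,j}|C_{ij}|\ge -n\|C\|_\infty$ using $\|\tilde\sigma_i^k\|=1$. The only cosmetic difference is that the paper reaches the cancellation by adding and subtracting $\langle C\tilde\sigma^k,\tilde\sigma^k\rangle$ and then invoking \eqref{prim-dual-link}, whereas you substitute $y^k=C\tilde\sigma^k$ first and cancel; your remark that symmetry of $C$ is what makes $\langle C,\tilde\sigma^k(\sigma^k)^\top\rangle=\langle C\tilde\sigma^k,\sigma^k\rangle$ is a point the paper uses implicitly in its very first line.
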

\begin{proof}[Proof of Lemma \ref{lemma-lower-obj}]
	\begin{align}
		L(\tilde{\sigma}^k, \sigma^k, y^k) = & \langle C \tilde{\sigma}^k, \sigma^k \rangle + \langle y^k,\tilde{\sigma}^k - \sigma^k \rangle + \frac{\rho}{2}\|\tilde{\sigma}^k - \sigma^k\|^2_F \notag \\
		= & \langle C \tilde{\sigma}^k, \tilde{\sigma}^k \rangle + \langle y^k - C \tilde{\sigma}^k,\tilde{\sigma}^k - \sigma^k \rangle + \frac{\rho}{2}\|\tilde{\sigma}^k - \sigma^k\|^2_F \notag \\
		\overset{\eqref{prim-dual-link}}{=} & \langle C \tilde{\sigma}^k, \tilde{\sigma}^k \rangle + \frac{\rho}{2}\|\tilde{\sigma}^k - \sigma^k\|^2_F \notag \\
		\ge & \langle C\tilde{\sigma}^k,\tilde{\sigma}^k \rangle = \sum_{i=1}^{n}\sum_{j=1}^{n} C_{i,j} \langle \tilde{\sigma}^k_i, \tilde{\sigma}^k_j \rangle \overset{\|\tilde{\sigma}^k_i\| = 1}{\ge} -\sum_{i=1}^{n}\sum_{j=1}^{n} \vert C_{i,j} \vert \notag\\
		\ge & -n\|C\|_{\infty}. \notag
	\end{align}
\end{proof}

\subsection{Proof of Theorem \ref{theorem-global}}
We can now prove Theorem~\ref{theorem-global} given the results of Lemma~\ref{lemma-monotonic-decrease} and Lemma \ref{lemma-lower-obj}.
\begin{proof}[Proof of Theorem \ref{theorem-global}]
	Since $L_{\rho} (\tilde{\sigma}^k, \sigma^k, y^k)$ is monotonically non-increasing (Lemma \ref{lemma-monotonic-decrease}) and lower bounded (Lemma \ref{lemma-lower-obj}), $L_{\rho} (\tilde{\sigma}^k,  \sigma^k,  y^k)$ converges to a constant value due to the monotone convergence theorem. As a consequence, both $\|\sigma^{k+1} - \sigma^k\|_F$ and $ \|\tilde{\sigma}^{k+1} - \tilde{\sigma}^k\|_F$ converge to $0$ from Lemma \ref{lemma-monotonic-decrease}. We also have \begin{align*}
		\|y^{k+1} - y^k\|_F \le  \|C\|_F \|(\tilde{\sigma}^{k+1} - \tilde{\sigma}^k)\|_F,
	\end{align*}
	due to \eqref{prim-dual-link}. Hence, $\|y^{k+1} - y^k\|_F$ also converges to $0$ when $k$ is sufficiently large. In addition, $\|y^{k+1} - y^k\|_F$ can be written as
	\begin{align*}
		\|y^{k+1} - y^k\|_F = \rho\|\tilde{\sigma}^{k+1} - \sigma^{k+1}\|_F
	\end{align*}
	due to the dual update \eqref{ADMM-3}. Since $\|y^{k+1} - y^k\|_F \to 0$, we obtain $\|\tilde{\sigma}^{k+1} - \sigma^{k+1}\|_F \rightarrow 0$ for constant $\rho$. According to \eqref{equivalent-aug-lagrangian}, we have
	\begin{align*}
		L_{\rho} (\tilde{\sigma}^k, \sigma^k, y^k) = \langle C \tilde{\sigma}^k, \tilde{\sigma}^k \rangle + \frac{\rho}{2}\|\tilde{\sigma}^k - \sigma^k\|^2_F.
	\end{align*}
	Due to the convergence of $\|\tilde{\sigma}^k - \sigma^k\|_F \rightarrow 0$, we obtain
	\begin{align*}
		L_{\rho} (\tilde{\sigma}^k, \sigma^k, y^k) \rightarrow \langle C\tilde{\sigma}^k, \tilde{\sigma}^k \rangle.
	\end{align*}
	Moreover, the optimality condition of \eqref{ADMM-1} can be shown to imply that
	\begin{align}
		0 \in (C\sigma^k)_{i, \cdot}^\top + y^k_i +\rho (\tilde{\sigma}_i^{k+1} - \sigma_i^k) + \frac{\partial I_{\|\tilde{\sigma}_i\|=1}(\tilde{\sigma}_i^{k+1})}{\partial \tilde{\sigma}_i^{k+1}},
	\end{align}
	following the same argument as in~\cite{Attouch13}. Since $\|\tilde{\sigma}_i^{k+1} - \tilde{\sigma}_i^k\| \rightarrow 0$ and $\|\tilde{\sigma}_i^k - \sigma_i^k\| \rightarrow 0$, we obtain
	\begin{align}
		\lim\limits_{k \to +\infty} \text{dist}\left(- (C\sigma^k)_{i, \cdot}^\top - y^k_i, \frac{\partial I_{\|\tilde{\sigma}_i\|=1}(\tilde{\sigma}_i^{k+1})}{\partial \tilde{\sigma}_i^{k+1}} \right) = 0
	\end{align}
	and hence
	\begin{align}
		\lim\limits_{k \to +\infty}\text{dist}\left(- 2(C\tilde{\sigma}^k)_{i, \cdot}^\top, \frac{\partial I_{\|\tilde{\sigma}_i\|=1}(\tilde{\sigma}_i^k)}{\partial \tilde{\sigma}_i^k} \right) = 0
	\end{align}
	due to $\|\sigma^{k+1} - \sigma^k\|_F \to 0$, $\|\tilde{\sigma}^{k} - \sigma^k\|_F \rightarrow 0$ and~\eqref{prim-dual-link}, which means $\tilde{\sigma}^k$ lies in the set of first-order stationary points as defined in \eqref{1st-stationary-point} when $k \to \infty$. In addition, the compactness of $\mathcal{M}$ implies that there is a subsequence of $\tilde{\sigma}^k$ that converges to a first-order stationary point.
\end{proof}

\section{Convergence of the Sequence $\tilde{\sigma}^k$}\label{section-1st-convergence-sequence}
Theorem~\ref{theorem-global} established convergence of the sequence of Langrangian values $L_\rho(\tilde \sigma ^k,\sigma^k,y^k)$.   We next consider the behavior of the sequence of iterates $\tilde{\sigma}^k$.  We show that this sequence converges to a first-order stationary point in the following Theorem, which is the main result of this section:
\begin{theorem} \label{theorem-global-1st}
	Under Assumption \ref{assumption-1}, if we set $\rho \ge \max \left\{10\|C\|_{\infty}, 2\cdot \|C\|\right\}$, then the sequence $(\tilde{\sigma}^k)_{k \in \mathbb{N}}$ generated by Algorithm~\ref{alg_ADMM} converges to a critical point $\bar{\sigma}$ of $f$. Moreover, the sequence $(\tilde{\sigma}^k)_{k \in \mathbb{N}}$ has a finite length, i.e. $\sum_{k=0}^{+\infty} \|\tilde{\sigma}^{k+1} - \tilde{\sigma}^k\| < \infty$.
\end{theorem}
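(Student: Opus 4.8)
The plan is to realize Algorithm~\ref{alg_ADMM} as a descent sequence for a Kurdyka--{\L}ojasiewicz (KL) function and then invoke the abstract convergence theorem for such sequences (e.g.\ \cite{Attouch13}). Collecting the iterates as $z^k := (\tilde\sigma^k,\sigma^k,y^k)$ and taking the augmented Lagrangian $\Phi := L_{\rho}$ as the candidate Lyapunov/KL function, I would establish the three standard hypotheses: (H1) a \emph{sufficient decrease} $\Phi(z^k)-\Phi(z^{k+1})\ge c\,\|z^{k+1}-z^k\|_F^2$ for some $c>0$; (H2) a \emph{relative error} bound $\text{dist}(0,\partial\Phi(z^{k+1}))\le b\,\|z^{k+1}-z^k\|_F$; and (H3) the existence of a subsequence $z^{k_j}\to\bar z$ with $\Phi(z^{k_j})\to\Phi(\bar z)$. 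Since $\Phi$ is the sum of a polynomial in $(\tilde\sigma,\sigma,y)$ and the indicator of $\mathcal M$ --- a finite product of unit spheres, hence a semialgebraic set --- $\Phi$ is semialgebraic and therefore a KL function, so the abstract theorem then delivers $\sum_k\|z^{k+1}-z^k\|_F<\infty$ and $z^k\to\bar z$.

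For (H1), Lemma~\ref{lemma-monotonic-decrease} already supplies $\Phi(z^k)-\Phi(z^{k+1})\ge\kappa\|C\|\,\|\tilde\sigma^{k+1}-\tilde\sigma^k\|_F^2+\tfrac{\rho}{2}\|\sigma^{k+1}-\sigma^k\|_F^2$ with $\kappa>0$ under the stated lower bound on $\rho$; combining it with $\|y^{k+1}-y^k\|_F=\|C(\tilde\sigma^{k+1}-\tilde\sigma^k)\|_F\le\|C\|\,\|\tilde\sigma^{k+1}-\tilde\sigma^k\|_F$ (from \eqref{prim-dual-link}) lets me peel off a positive multiple of $\|y^{k+1}-y^k\|_F^2$ and so dominate every block of $\|z^{k+1}-z^k\|_F$. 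For (H2), I would use the first-order optimality of subproblems \eqref{ADMM-1} and \eqref{ADMM-2} together with the dual step \eqref{ADMM-3}: subtracting the gradients appearing there from those of $\Phi$ evaluated at $z^{k+1}$ produces an explicit element of $\partial\Phi(z^{k+1})$ whose $\tilde\sigma$-, $\sigma$- and $y$-blocks are, respectively, $C(\sigma^{k+1}-\sigma^k)+(y^{k+1}-y^k)-\rho(\sigma^{k+1}-\sigma^k)$, $\,-(y^{k+1}-y^k)$, and $\tilde\sigma^{k+1}-\sigma^{k+1}=\tfrac1\rho(y^{k+1}-y^k)$; each is bounded by a constant depending only on $\rho$ and $\|C\|$ times $\|z^{k+1}-z^k\|_F$. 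For (H3), compactness of $\mathcal M$ together with $\|\tilde\sigma^k-\sigma^k\|_F\to0$ and $y^k=C\tilde\sigma^k$ --- all established in (the proof of) Theorem~\ref{theorem-global} --- yields a convergent subsequence $z^{k_j}\to\bar z=(\bar\sigma,\bar\sigma,C\bar\sigma)$ with $\bar\sigma\in\mathcal M$, and since $\Phi(z^k)=\langle C\tilde\sigma^k,\tilde\sigma^k\rangle+\tfrac{\rho}{2}\|\tilde\sigma^k-\sigma^k\|_F^2$ on the iterates, $\Phi(z^{k_j})\to\langle C\bar\sigma,\bar\sigma\rangle=\Phi(\bar z)$.

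Once (H1)--(H3) and the KL property hold, the abstract result gives finite length of $(z^k)$ and convergence to a critical point $\bar z$ of $\Phi$, i.e.\ $0\in\partial\Phi(\bar z)$. Reading off the $\tilde\sigma$-block of this inclusion at $\bar z=(\bar\sigma,\bar\sigma,C\bar\sigma)$ gives $-2C\bar\sigma\in\partial\mathcal I_{\mathcal M}(\bar\sigma)$, which is exactly the first-order stationarity condition \eqref{1st-stationary-point} (equivalently, $\bar\sigma$ is a critical point of $f$ over $\mathcal M$); this can also be inherited directly from the stationarity analysis in Theorem~\ref{theorem-global}, now applied to the full limit rather than a subsequence. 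Finally, $\sum_k\|\tilde\sigma^{k+1}-\tilde\sigma^k\|_F\le\sum_k\|z^{k+1}-z^k\|_F<\infty$ gives the finite-length claim, and $\tilde\sigma^k\to\bar\sigma$.

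The step I expect to be the main obstacle is (H2): one must argue that the vector constructed above is genuinely an element of the \emph{limiting} subdifferential $\partial\Phi(z^{k+1})$ --- which relies on the sum rule $\partial\Phi=\nabla(\text{smooth part})+\partial\mathcal I_{\mathcal M}$, valid because the smooth part is $C^1$ and $\mathcal I_{\mathcal M}$ is a closed-set indicator --- and that the constants are uniform in $k$. A secondary point requiring care is the bookkeeping in (H1): the $\sigma$- and $y$-blocks of $\|z^{k+1}-z^k\|_F$ are absorbed into the $\tilde\sigma$-block precisely via \eqref{prim-dual-link} and \eqref{sigma-update}, and this is where the quantitative lower bound on $\rho$ is actually consumed; should that absorption prove too lossy, an equivalent route is to run the KL argument on the memory-one state $(\tilde\sigma^k,\tilde\sigma^{k-1})$, since both $\sigma^k$ and $y^k$ are explicit affine functions of $\tilde\sigma^k$ and $\tilde\sigma^{k-1}$. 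Verifying the KL property itself is routine once semialgebraicity is observed.
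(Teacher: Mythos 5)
Your proposal is correct and follows essentially the same route as the paper: the same abstract KL convergence theorem of \cite{Attouch13}, the sufficient decrease from Lemma~\ref{lemma-monotonic-decrease}, the relative-error bound read off from the optimality conditions of \eqref{ADMM-1}--\eqref{ADMM-3}, and the absorption of the dual increment via $y^{k+1}-y^k=C(\tilde\sigma^{k+1}-\tilde\sigma^k)$. The only difference is bookkeeping: you keep $y$ as a third block and apply the KL machinery to $L_\rho(\tilde\sigma,\sigma,y)$, whereas the paper uses the identity $y^k=C\tilde\sigma^k$ to eliminate $y$ up front and runs the same argument on the two-block ``twin'' function $G_\rho(\tilde\sigma,\sigma)$; both hinge on exactly that identity, so the two presentations are interchangeable.
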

Our result relies on the Kurdyka-\L ojasiewicz property, defined as follows:
\begin{definition}[Kurdyka-\L ojasiewicz (KL) property]\label{D:Kurdyk}
	The function $f$ is said to have the Kurdyka-\L ojasie\-wicz property at $\bar{x} \in \mbox{\rm dom} \ \! \partial \! f$ if there exist $\eta \in (0,+\infty]$, a neighborhood $U$ of $\bar{x}$ and a continuous concave function $\varphi:[0,\eta)\rightarrow \mathbb{R}_+$ such that:

	{\bf -} $\varphi(0)=0$,

	{\bf -} $\varphi$ is $C^1$ on $(0,\eta)$,

	{\bf -} for all $s\in (0,\eta)$, $\varphi'(s)>0$,

	{\bf -} and for all $x$ in $U \cap[f(\bar x)<f<f(\bar x)+\eta]$, the
	Kurdyka-\L ojasie\-wicz inequality holds
	\begin{equation*}
		\varphi'(f(x)-f(\bar{x}))\,\mbox{\rm dist}(0,\partial f(x)) \geq 1.
	\end{equation*}
	Moreover, $f$ is called a KL function if it satisfies KL property at each point of $\text{dom} \ \! \partial \! f$.
\end{definition}
When a function $f$ is known to be a KL function, we can prove the convergence of an algorithm to critical points by checking conditions in the following proposition from~\cite{Attouch13}:
\begin{lemma}[Theorem 2.9 in~\cite{Attouch13}]\label{lemma-KL-convergence}
	Suppose $h: \mathbb{R}^n \to \mathbb{R} \cup \{\infty\}$ is a proper lower semicontinuous function and $(x^{k})_{k \in \mathbb{N}}$ is a sequence satisfying all of the following:
	\begin{itemize}
		\item (C1) \textit{Sufficient decrease condition}: $\forall k \in \mathbb{N},$
		\begin{align*}
			h(x^{k+1}) + a\|x^{k+1} - x^k\|^2 \le h(x^{k});
		\end{align*}
		\item (C2) \textit{Relative error condition}: $\forall k \in \mathbb{N}, \exists \ g^{k+1} \in \partial h(x^{k+1})$ such that
		\begin{align*}
			\|g^{k+1}\| \le b \|x^{k+1} - x^k\|;
		\end{align*}
		\item (C3) \textit{Continuity condition}: There exists a subsequence $(x^{k_j})_{j \in \mathbb{N}}$ and $\bar{x}$ such that, when $j \to \infty$,
		\begin{align*}
			x^{k_j} \to \bar{x} \text{ and } h(x^{k_j}) \to h(\bar{x}).
		\end{align*}
	\end{itemize}
	Here, $a, b$ are positive constant. If $h$ has the Kurdyka-\L ojasie\-wicz property at the cluster point $\bar{x}$ specified in (C3), then $\bar{x}$ is a critical point of $h$ and the sequence $(x^k)_{k \in \mathbb{N}}$ converges to it as $k \to \infty$. Moreover, the sequence $(x^k)_{k \in \mathbb{N}}$ has a finite length, i.e.
	\begin{align*}
		\sum_{k=0}^{+\infty}\|x^{k+1} - x^k\| < \infty.
	\end{align*}
\end{lemma}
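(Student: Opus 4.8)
The plan is to reproduce the classical argument of Attouch--Bolte--Svaiter, whose engine is the concavity of the desingularizing function $\varphi$ together with the three hypotheses (C1)--(C3); throughout write $r_k := \|x^{k+1}-x^k\|$ and $c_k := h(x^k)-h(\bar x)$. \textbf{Step 1 (function values and the degenerate case).} First I would use (C1) to see that $(h(x^k))_k$ is non-increasing, and combine this monotonicity with the subsequential limit $h(x^{k_j})\to h(\bar x)$ from (C3) to conclude that the whole sequence satisfies $h(x^k)\downarrow h(\bar x)$; in particular $c_k\ge 0$ for all $k$. Summing (C1) telescopically gives $a\sum_k r_k^2 \le h(x^0)-h(\bar x)<\infty$, so $r_k\to 0$. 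If $c_K=0$ for some finite $K$, then monotonicity gives $c_k=0$ for all $k\ge K$, whence (C1) forces $r_k=0$ for $k\ge K$; the sequence is then eventually constant, and matching it against the convergent subsequence shows the constant value is $\bar x$, making both conclusions trivial. I may therefore assume $c_k>0$ for every $k$.

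\textbf{Step 2 (the key recursion).} Next I would invoke the KL property at $\bar x$: there exist $\eta>0$, a neighborhood $U$ of $\bar x$, and a concave $C^1$ function $\varphi$ with $\varphi(0)=0$ and $\varphi'>0$, such that $\varphi'(h(x)-h(\bar x))\,\mathrm{dist}(0,\partial h(x))\ge 1$ whenever $x\in U$ and $0<h(x)-h(\bar x)<\eta$ (note this inequality forces $\mathrm{dist}(0,\partial h(x))>0$). Setting $\Delta_k:=\varphi(c_k)\ge 0$, concavity gives $\Delta_k-\Delta_{k+1}\ge \varphi'(c_k)(c_k-c_{k+1})$ at any index where the KL inequality applies; bounding $\varphi'(c_k)\ge 1/\mathrm{dist}(0,\partial h(x^k))$ by KL, using (C2) to get $\mathrm{dist}(0,\partial h(x^k))\le\|g^k\|\le b\,r_{k-1}$, and using (C1) in the form $c_k-c_{k+1}\ge a\,r_k^2$, I obtain
\[
r_k^2 \le \frac{b}{a}\, r_{k-1}\,(\Delta_k-\Delta_{k+1}).
\]
The inequality $\sqrt{uv}\le (u+v)/2$ then linearizes this into $2r_k \le r_{k-1}+\frac{b}{a}(\Delta_k-\Delta_{k+1})$.

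\textbf{Step 3 (the main obstacle: trapping the iterates in $U$).} The recursion is only licensed at indices with $x^k\in U$ and $c_k<\eta$, so the crux of the proof, and its hardest part, is an induction keeping the iterates inside a ball $B(\bar x,\rho)\subseteq U$. I would fix such a $\rho$ and, using $r_k\to 0$, $\Delta_k=\varphi(c_k)\to\varphi(0)=0$, and a subsequence $x^{k_j}\to\bar x$, choose a starting index $k_0$ with $c_{k_0}<\eta$, with $\|x^{k_0}-\bar x\|<\rho/2$, and with the budget $r_{k_0-1}+\frac{b}{a}\Delta_{k_0}<\rho/2$. Telescoping the linearized recursion from $k_0$ to $N$ and shifting the index on $\sum r_{k-1}$ yields the uniform bound $\sum_{k=k_0}^{N} r_k \le r_{k_0-1}+\frac{b}{a}\Delta_{k_0}$, so by the triangle inequality $\|x^{N+1}-\bar x\|\le\|x^{k_0}-\bar x\|+\sum_{k=k_0}^{N} r_k<\rho$. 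This keeps $x^{N+1}\in B(\bar x,\rho)\subseteq U$, the KL inequality stays applicable, and the induction closes. The delicate point is that the very estimate bounding the path length is what confines the iterates, so the bound must be arranged to be self-sustaining.

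\textbf{Step 4 (conclusions).} Letting $N\to\infty$ in the uniform bound gives $\sum_k r_k<\infty$, which is the claimed finite length. A finite-length sequence is Cauchy, hence convergent; since $x^{k_j}\to\bar x$, the limit is $\bar x$, proving $x^k\to\bar x$. Finally, (C2) provides $g^{k+1}\in\partial h(x^{k+1})$ with $\|g^{k+1}\|\le b\,r_k\to 0$, while $x^{k+1}\to\bar x$ and $h(x^{k+1})\to h(\bar x)$; the closedness (outer semicontinuity) of the graph of the limiting subdifferential then forces $0\in\partial h(\bar x)$, so $\bar x$ is a critical point, completing the argument.
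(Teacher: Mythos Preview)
Your argument is the standard Attouch--Bolte--Svaiter proof and is correct as written: the four steps (monotone convergence of values, the concavity-plus-KL recursion $r_k^2\le (b/a)\,r_{k-1}(\Delta_k-\Delta_{k+1})$, the self-sustaining trapping induction, and the closed-graph conclusion) are exactly the ingredients of Theorem~2.9 in the cited reference.

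There is no comparison to make with the paper's own proof, however, because the paper does not prove this lemma at all. It is quoted verbatim as an external result (``Theorem~2.9 in~\cite{Attouch13}'') and used as a black box: the authors verify conditions (C1)--(C3) for their auxiliary function $G_\rho(\tilde\sigma,\sigma)$ and then invoke the lemma to conclude Theorem~\ref{theorem-global-1st}. So what you have produced is a faithful reconstruction of the proof from the original Attouch--Bolte--Svaiter paper, not an alternative to anything in the present manuscript.
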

The lemma above is the key component to prove Theorem~\ref{theorem-global-1st}.

Next, we discuss how to utilize the KL property inside the proof of Theorem~\ref{theorem-global-1st}. The proof of Theorem~\ref{theorem-global-1st} follows \cite{Bolte14},~\cite{Attouch13} and proceeds in three parts. We first define a \textit{twin problem} in Section~\ref{subsubsection-twin-problem} and prove the equivalence of first-order stationary points between the original problem and the twin one. Next, we show Algorithm~\ref{alg_ADMM} converges to critical points of the twin problem via Lemma~\ref{lemma-KL-convergence} and thus also critical points of the original problem~\eqref{sphere-form} by Lemma~\ref{lemma-critical-point}. The key is to show that the twin problem satisfies (C1), (C2), (C3) and $G_{\rho}(\tilde{\sigma}, \sigma)$ is a KL function.

\subsection{Twin problem}\label{subsubsection-twin-problem}
Since we have obtained $y^k = C\tilde{\sigma}^k, k \ge 1$ in~\eqref{prim-dual-link}, Algorithm~\ref{alg_ADMM} can also be interpreted as an alternating method for the minimization over the function
\[
G_{\rho}(\tilde{\sigma}, \sigma) := \langle C, \tilde{\sigma} \tilde{\sigma}^\top \rangle + \frac{\rho}{2}\|\tilde{\sigma} - \sigma\|^2_F + \sum_{i=1}^n \mathcal{I}_{\|\tilde{\sigma}_i\|=1}(\tilde{\sigma}_i) \label{def-Grho}
\]
Note that $G_{\rho}(\tilde{\sigma}^k, \sigma^k) = L_{\rho}(\tilde{\sigma}^k, \sigma^k, y^k)$ for all $k \ge 1$  when we take $y = C\tilde{\sigma}$ in the augmented Lagrangian $L_{\rho}(\mathbf{\sigma}, \tilde{\mathbf{\sigma}}, y)$. It then remains to prove Algorithm~\ref{alg_ADMM} converges to a critical point of~\eqref{def-Grho}, which is also a critical point of~\eqref{sphere-form} due to the following lemma.
\begin{lemma}\label{lemma-critical-point}
	The problem~\eqref{def-Grho} and the problem~\eqref{sphere-form} have the same critical points.
\end{lemma}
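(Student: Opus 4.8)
The plan is to compute the limiting subdifferential of $G_\rho$ explicitly, read off its zeros, and match them with the first‑order stationarity condition \eqref{1st-stationary-point} that defines the critical points of \eqref{sphere-form}. Write $G_\rho(\tilde\sigma,\sigma)=g(\tilde\sigma,\sigma)+\sum_{i=1}^n\mathcal{I}_{\|\tilde\sigma_i\|=1}(\tilde\sigma_i)$ with $g(\tilde\sigma,\sigma):=\langle C,\tilde\sigma\tilde\sigma^\top\rangle+\frac{\rho}{2}\|\tilde\sigma-\sigma\|_F^2$, which is $C^1$ with $\nabla_{\tilde\sigma}g=2C\tilde\sigma+\rho(\tilde\sigma-\sigma)$ and $\nabla_\sigma g=\rho(\sigma-\tilde\sigma)$ (the factor $2$ coming from differentiating $\langle C,\tilde\sigma\tilde\sigma^\top\rangle$ with $C\in\mathbb{S}^n$, exactly as in the proof of Theorem~\ref{theorem-global}). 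Since the indicator term is proper lower semicontinuous and $g$ is continuously differentiable, the exact sum rule for limiting subdifferentials \cite{Attouch13} gives $\partial G_\rho(\tilde\sigma,\sigma)=\bigl(\nabla_{\tilde\sigma}g+\prod_{i=1}^n\partial\mathcal{I}_{\|\tilde\sigma_i\|=1}(\tilde\sigma_i),\ \nabla_\sigma g\bigr)$, where for each $i$ the set $\partial\mathcal{I}_{\|\tilde\sigma_i\|=1}(\tilde\sigma_i)$ is the normal cone to the unit sphere at $\tilde\sigma_i$, i.e.\ the line $\mathbb{R}\,\tilde\sigma_i$.

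First I would treat the forward direction. If $0\in\partial G_\rho(\tilde\sigma,\sigma)$, the $\sigma$‑block forces $\rho(\sigma-\tilde\sigma)=0$, hence $\sigma=\tilde\sigma$; substituting this into the $\tilde\sigma$‑block gives, for every $i\in[n]$, $0\in 2(C\tilde\sigma)_{i,\cdot}^\top+\mathbb{R}\,\tilde\sigma_i$. Because $\mathbb{R}\,\tilde\sigma_i$ is a cone, this is equivalent to $-(C\tilde\sigma)_{i,\cdot}^\top\in\mathbb{R}\,\tilde\sigma_i=\partial\mathcal{I}_{\|\tilde\sigma_i\|=1}(\tilde\sigma_i)$, which is precisely \eqref{1st-stationary-point}; since $\eqref{sphere-form}$ is the minimization of $f+\sum_i\mathcal{I}_{\|\sigma_i\|=1}$ with $\nabla f=2C\sigma$, its criticality condition $0\in\partial(f+\sum_i\mathcal{I}_{\|\sigma_i\|=1})$ is exactly \eqref{1st-stationary-point}, so $\tilde\sigma$ is a critical point of \eqref{sphere-form}. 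Conversely, if $\sigma^\star$ is a critical point of \eqref{sphere-form}, I would check that $(\sigma^\star,\sigma^\star)$ is a critical point of $G_\rho$: its $\sigma$‑block contains $\rho(\sigma^\star-\sigma^\star)=0$, and its $\tilde\sigma$‑block equals $2(C\sigma^\star)_{i,\cdot}^\top+\mathbb{R}\,\sigma_i^\star$, which contains $0$ by the criticality of $\sigma^\star$ together with the cone property. Thus $\sigma^\star\mapsto(\sigma^\star,\sigma^\star)$ is a bijection between the critical set of \eqref{sphere-form} and that of $G_\rho$, so in this sense the two problems have the same critical points.

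The main obstacle is not any hard estimate but the clean justification of the subdifferential calculus underlying the first paragraph: that $\partial G_\rho$ splits as $\nabla g$ plus the product over $i$ of the normal cones $\mathbb{R}\,\tilde\sigma_i$. This rests on (i) the exact sum rule for a $C^1$ function plus a lower semicontinuous one, and (ii) the fact that $\mathcal{M}$ is a product of unit spheres, so its (Mordukhovich/Clarke, here coinciding) normal cone at a feasible point is the product of the individual normal cones, each being $\mathbb{R}\,\tilde\sigma_i$; both are standard \cite{Attouch13}. The only remaining bookkeeping is the harmless factor $2$ in $\nabla\langle C,\cdot\,(\cdot)^\top\rangle$, which is absorbed by the cone and therefore does not affect the equivalence.
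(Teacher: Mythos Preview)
Your proof is correct and follows essentially the same approach as the paper's: compute the subdifferential of $G_\rho$ as $\bigl(2C\tilde\sigma+\rho(\tilde\sigma-\sigma)+\partial\mathcal{I}_{\mathcal{M}},\,\rho(\sigma-\tilde\sigma)\bigr)$ and observe that its zeros are exactly the pairs $(\tilde\sigma,\tilde\sigma)$ with $\tilde\sigma$ satisfying \eqref{1st-stationary-point}. The paper's own proof is terser (it simply writes down the two partial subdifferentials and asserts the equivalence), whereas you spell out the sum rule, the identification $\partial\mathcal{I}_{\|\tilde\sigma_i\|=1}(\tilde\sigma_i)=\mathbb{R}\,\tilde\sigma_i$, and the absorption of the factor $2$ by the cone, but there is no substantive difference in strategy.
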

\begin{proof}[Proof of Lemma~\ref{lemma-critical-point}]

	A critical point of~\eqref{def-Grho} satisfies
	\begin{align*}
		\partial_{\tilde{\sigma}} G_{\rho}(\tilde{\sigma}, \sigma) & = 2C\tilde{\sigma} + \rho(\tilde{\sigma}-\sigma) + \partial \left(\sum_{i=1}^n \mathcal{I}_{\|\tilde{\sigma}_i\|=1}(\tilde{\sigma}_i)\right), \\
		\partial_{\sigma} G_{\rho}(\tilde{\sigma}, \sigma) & = \rho(\sigma - \tilde{\sigma}).\\
	\end{align*}
	$0 \in \partial_{\tilde{\sigma}} G_{\rho}(\tilde{\sigma}, \sigma)$ and $\partial_{\sigma} G_{\rho}(\tilde{\sigma}, \sigma) = 0$ (a critical point to $G_{\rho}(\tilde{\sigma}, \sigma)$) are equivalent to $\tilde{\sigma}$ is a critical point to the problem~\eqref{sphere-form}.
\end{proof}

\subsection{Proof of Theorem~\ref{theorem-global-1st}}
In order to prove Theorem~\ref{theorem-global-1st} we will first need the following result showing that the property (C2) is satisfied for $G_{\rho}(\tilde{\sigma}, \sigma)$:
\begin{lemma}\label{lemma-C2}
	The sequence $(\tilde{\sigma}^k, \sigma^k)$ generated by Algorithm~\ref{alg_ADMM} satisfies the property (C2) w.r.t. $G_{\rho}(\tilde{\sigma}, \sigma)$:
	\begin{align}
		\bigg{\|}\begin{matrix}
			\partial_{\tilde{\sigma}}G_{\rho}(\tilde{\sigma}^{k+1}, \sigma^{k+1})\\
			\partial_{\sigma}G_{\rho}(\tilde{\sigma}^{k+1}, \sigma^{k+1})
		\end{matrix}\bigg{\|}_F
		\le \left( 2\|C\|+\rho+\frac{\|C\|^2}{\rho} \right)
		\bigg{\|}\begin{matrix}
			\tilde{\sigma}^{k+1} - \tilde{\sigma}^k\\
			\sigma^{k+1} - \sigma^k
		\end{matrix}\bigg{\|}_F, \ \forall k \ge 1 .
	\end{align}
\end{lemma}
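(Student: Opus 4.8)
The plan is to certify property (C2) by producing one explicit element $g^{k+1}=(g^{k+1}_{\tilde{\sigma}};\,g^{k+1}_{\sigma})\in\partial G_{\rho}(\tilde{\sigma}^{k+1},\sigma^{k+1})$, read off from the optimality conditions of the two ADMM minimization steps, and then bounding $\|g^{k+1}\|_F$ solely by $\|\tilde{\sigma}^{k+1}-\tilde{\sigma}^k\|_F$ and $\|\sigma^{k+1}-\sigma^k\|_F$. Recall from the computation in the proof of Lemma~\ref{lemma-critical-point} that $\partial_{\sigma}G_{\rho}(\tilde{\sigma},\sigma)=\rho(\sigma-\tilde{\sigma})$ is single-valued, while $\partial_{\tilde{\sigma}}G_{\rho}(\tilde{\sigma},\sigma)=2C\tilde{\sigma}+\rho(\tilde{\sigma}-\sigma)+\partial\big(\sum_i\mathcal{I}_{\|\tilde{\sigma}_i\|=1}(\tilde{\sigma}_i)\big)$, so only the normal-cone term needs a choice of element. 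The natural one is supplied by the stationarity of the sphere-constrained subproblem \eqref{ADMM-1}: by the optimality condition of \eqref{ADMM-1} derived in the proof of Theorem~\ref{theorem-global} (following~\cite{Attouch13}), $-\big(C\sigma^k+y^k+\rho(\tilde{\sigma}^{k+1}-\sigma^k)\big)\in\partial\big(\sum_i\mathcal{I}_{\|\tilde{\sigma}_i\|=1}(\tilde{\sigma}_i^{k+1})\big)$. I therefore take $g^{k+1}_{\tilde{\sigma}}:=2C\tilde{\sigma}^{k+1}+\rho(\tilde{\sigma}^{k+1}-\sigma^{k+1})-C\sigma^k-y^k-\rho(\tilde{\sigma}^{k+1}-\sigma^k)$ and $g^{k+1}_{\sigma}:=\rho(\sigma^{k+1}-\tilde{\sigma}^{k+1})$.

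Next I would simplify these two quantities using two identities valid for all $k\ge1$: $y^k=C\tilde{\sigma}^k$ from \eqref{prim-dual-link}, and $\tilde{\sigma}^{k+1}-\sigma^{k+1}=\tfrac1\rho(C\tilde{\sigma}^{k+1}-y^k)=\tfrac1\rho C(\tilde{\sigma}^{k+1}-\tilde{\sigma}^k)$ from \eqref{sigma-update}. The first immediately gives $g^{k+1}_{\sigma}=C(\tilde{\sigma}^k-\tilde{\sigma}^{k+1})$, hence $\|g^{k+1}_{\sigma}\|_F\le\|C\|\,\|\tilde{\sigma}^{k+1}-\tilde{\sigma}^k\|_F$. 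For $g^{k+1}_{\tilde{\sigma}}$, the two $\rho(\tilde{\sigma}^{k+1}-\cdot)$ terms collapse to $\rho(\sigma^k-\sigma^{k+1})$ and substituting $y^k=C\tilde{\sigma}^k$ leaves $g^{k+1}_{\tilde{\sigma}}=2C\tilde{\sigma}^{k+1}-C\tilde{\sigma}^k-C\sigma^k+\rho(\sigma^k-\sigma^{k+1})$. I would then regroup this as $C(\tilde{\sigma}^{k+1}-\tilde{\sigma}^k)+C(\tilde{\sigma}^{k+1}-\sigma^{k+1})+(C-\rho I)(\sigma^{k+1}-\sigma^k)$ and eliminate the middle term via $C(\tilde{\sigma}^{k+1}-\sigma^{k+1})=\tfrac1\rho C^2(\tilde{\sigma}^{k+1}-\tilde{\sigma}^k)$, arriving at $g^{k+1}_{\tilde{\sigma}}=\big(C+\tfrac1\rho C^2\big)(\tilde{\sigma}^{k+1}-\tilde{\sigma}^k)+(C-\rho I)(\sigma^{k+1}-\sigma^k)$. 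Triangle-inequality bounds on the operator norms, $\|C+\tfrac1\rho C^2\|\le\|C\|+\tfrac{\|C\|^2}{\rho}$ and $\|C-\rho I\|\le\rho+\|C\|$, then yield $\|g^{k+1}_{\tilde{\sigma}}\|_F\le\big(\|C\|+\tfrac{\|C\|^2}{\rho}\big)\|\tilde{\sigma}^{k+1}-\tilde{\sigma}^k\|_F+(\rho+\|C\|)\|\sigma^{k+1}-\sigma^k\|_F$.

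To finish, I would combine the two estimates. Using $\|(A;B)\|_F=\sqrt{\|A\|_F^2+\|B\|_F^2}\le\|A\|_F+\|B\|_F$ and adding, $\|g^{k+1}\|_F\le\big(2\|C\|+\tfrac{\|C\|^2}{\rho}\big)\|\tilde{\sigma}^{k+1}-\tilde{\sigma}^k\|_F+(\rho+\|C\|)\|\sigma^{k+1}-\sigma^k\|_F$; a Cauchy--Schwarz step on this pairing bounds it by $\sqrt{(2\|C\|+\|C\|^2/\rho)^2+(\rho+\|C\|)^2}\cdot\|(\tilde{\sigma}^{k+1}-\tilde{\sigma}^k;\,\sigma^{k+1}-\sigma^k)\|_F$, and the elementary inequality $\sqrt{(2\|C\|+\|C\|^2/\rho)^2+(\rho+\|C\|)^2}\le 2\|C\|+\rho+\tfrac{\|C\|^2}{\rho}$ — which, after squaring, amounts to $2\rho\|C\|+\|C\|^2\ge0$ — produces exactly the stated constant. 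Note that only $\rho>0$ is required here, not the sharper lower bound on $\rho$ from Theorem~\ref{theorem-global-1st}.

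The step I expect to be the main obstacle is the first one: correctly identifying which element of the set-valued $\partial_{\tilde{\sigma}}G_{\rho}(\tilde{\sigma}^{k+1},\sigma^{k+1})$ makes (C2) work — namely the normal-cone vector generated by the nonconvex, sphere-constrained $\tilde{\sigma}$-subproblem — and making sure it is a genuine element of the limiting subdifferential of the sum of indicators (the same manifold-stationarity argument of~\cite{Attouch13} already used for Theorem~\ref{theorem-global}). After that the work is essentially bookkeeping: systematically using $y^k=C\tilde{\sigma}^k$ and the $\sigma$-update to re-express the ``stale'' quantities $\sigma^k$ and $y^k$ through the consecutive differences $\tilde{\sigma}^{k+1}-\tilde{\sigma}^k$ and $\sigma^{k+1}-\sigma^k$ — in particular choosing the regrouping of $g^{k+1}_{\tilde{\sigma}}$ that avoids reintroducing $\tilde{\sigma}^{k-1}$ — followed by a short constant-chase to land precisely on $2\|C\|+\rho+\|C\|^2/\rho$.
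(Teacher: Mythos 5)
Your proposal is correct and follows essentially the same route as the paper: you pick the same subgradient element from the optimality conditions of the two ADMM subproblems, use $y^k=C\tilde{\sigma}^k$ and the $\sigma$-update to arrive at the identical decomposition $\big(C+\tfrac{1}{\rho}C^2\big)(\tilde{\sigma}^{k+1}-\tilde{\sigma}^k)+(C-\rho I)(\sigma^{k+1}-\sigma^k)$, and then bound the norms. The only difference is that you spell out the final constant-chase (Cauchy--Schwarz plus the inequality reducing to $2\rho\|C\|+\|C\|^2\ge 0$), which the paper leaves implicit.
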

\begin{proof}
	According to Algorithm~\ref{alg_ADMM}, we have
	\begin{align}
		C\sigma^k + y^k + \rho(\tilde{\sigma}^{k+1} - \sigma^k) + v^{k+1} = 0, \label{1st-tilde-sigma}\\
		C\tilde{\sigma}^{k+1} - y^k + \rho(\sigma^{k+1} - \tilde{\sigma}^{k+1}) = 0,
	\end{align}
	due to the first-order optimality condition of~\eqref{ADMM-1},~\eqref{ADMM-2} where
	\begin{align*}
		v^{k+1} = \sum_{i=1}^n \partial I_{\|\tilde{\sigma}_i\|=1}(\tilde{\sigma}_i^{k+1}).
	\end{align*}
	Then, a subgradient of $G_{\rho}(\tilde{\sigma}, \sigma)$ at $(\tilde{\sigma}^{k+1}, \sigma^{k+1})$ is
	\begin{align}
		\partial_{\tilde{\sigma}}G_{\rho}(\tilde{\sigma}^{k+1}, \sigma^{k+1}) = & 2C\tilde{\sigma}^{k+1} + \rho(\tilde{\sigma}^{k+1} - \sigma^{k+1}) + v^{k+1} \notag\\
		\overset{\eqref{1st-tilde-sigma}}{=} & 2C\tilde{\sigma}^{k+1} + \rho(\tilde{\sigma}^{k+1} - \sigma^{k+1}) - [C\sigma^k + y^k + \rho(\tilde{\sigma}^{k+1} - \sigma^k)], \notag\\
		\overset{\eqref{prim-dual-link}}{=} & 2C\tilde{\sigma}^{k+1} - C\sigma^k - C\tilde{\sigma}^k - \rho(\sigma^{k+1} - \sigma^{k}) \notag\\
		= & C(\tilde{\sigma}^{k+1} - \tilde{\sigma}^k) + C(\sigma^{k+1} - \sigma^k) +C(\tilde{\sigma}^{k+1} - \sigma^{k+1}) - \rho(\sigma^{k+1} - \sigma^{k}) \notag\\
		\overset{\eqref{ADMM-3}}{=} & C(\tilde{\sigma}^{k+1} - \tilde{\sigma}^k) + C(\sigma^{k+1} - \sigma^k) +\frac{C}{\rho}(y^{k+1}-y^k) - \rho(\sigma^{k+1} - \sigma^{k}) \notag\\
		\overset{\eqref{prim-dual-link}}{=} & C(\tilde{\sigma}^{k+1} - \tilde{\sigma}^k) + C(\sigma^{k+1} - \sigma^k) +\frac{C^2}{\rho}(\tilde{\sigma}^{k+1}-\tilde{\sigma}^k) - \rho(\sigma^{k+1} - \sigma^{k})\\
		\partial_{\sigma}G_{\rho}(\tilde{\sigma}^{k+1}, \sigma^{k+1}) = & \rho(\sigma^{k+1} - \tilde{\sigma}^{k+1}) \overset{\eqref{ADMM-3}}{=} y^k - y^{k+1} \overset{\eqref{prim-dual-link}}{=} C(\tilde{\sigma}^k - \tilde{\sigma}^{k+1}).
	\end{align}
	Therefore,
	\begin{align}
		\bigg{\|}\begin{matrix}
			\partial_{\tilde{\sigma}}G_{\rho}(\tilde{\sigma}^{k+1}, \sigma^{k+1})\\
			\partial_{\sigma}G_{\rho}(\tilde{\sigma}^{k+1}, \sigma^{k+1})
		\end{matrix}\bigg{\|}_F
		\le \left( 2\|C\|+\rho+\frac{\|C\|^2}{\rho} \right)
		\bigg{\|}\begin{matrix}
			\tilde{\sigma}^{k+1} - \tilde{\sigma}^k\\
			\sigma^{k+1} - \sigma^k
		\end{matrix}\bigg{\|}_F, \ \forall k \ge 1 .
	\end{align}
\end{proof}

Finally, we give the proof for Theorem~\ref{theorem-global-1st}.
\begin{proof}[Proof of Theorem~\ref{theorem-global-1st}]
	(C1) is valid for $G_{\rho}(\tilde{\sigma}, \sigma)$ with $a = \min \{\kappa\|C\|, \rho\}$ since we can rewrite Lemma~\ref{lemma-monotonic-decrease} as
	\begin{align}
		G_{\rho}(\tilde{\sigma}^k, \sigma^k) - G_{\rho}(\tilde{\sigma}^{k+1}, \sigma^{k+1})
		\ge & \kappa \|C\| \cdot \|\tilde{\sigma}^{k+1} - \tilde{\sigma}^{k}\|^2_F + \frac{\rho}{2}\|\sigma^{k+1} - \sigma^k\|^2_F,
	\end{align}
	based on the primal-dual connection~\eqref{prim-dual-link}. (C2) is validated in Lemma~\ref{lemma-C2}. Similar to the proof of Theorem~\ref{theorem-global}, $\tilde{\sigma}^k \in \mathcal{M}$ and the update of $\sigma^k$ in Algorithm~\ref{alg_ADMM} imply that the sequence $(\tilde{\sigma}^k, \sigma^k)$ is bounded. Therefore, there exists a converging subsequence $(\tilde{\sigma}^{k_j}, \sigma^{k_j})$ and the continuity of $G_{\rho}(\tilde{\sigma}, \sigma)$ on $\mathcal{M} \times \mathbb{R}^{n \times r}$ mean that (C3) is satisfied. In addition, $G_{\rho}(\tilde{\sigma}, \sigma)$ is semi-algebraic and thus a KL function that satisfies the KL property at any point of $\mbox{\rm dom} \ \! G_{\rho}(\tilde{\sigma}, \sigma)${\footnote{The semi-algebraic functions and related KL property are detailed in \cite[\S5]{Bolte14}.}}. Therefore, Lemma~\ref{lemma-KL-convergence} applies to $G_{\rho}(\tilde{\sigma}, \sigma)$ and the sequence $(\tilde{\sigma}, \sigma)$ generated by Algorithm~\ref{alg_ADMM} converges to a critical point of the problem~\eqref{def-Grho} and thus a critical point of \eqref{sphere-form} due to Lemma~\ref{lemma-critical-point}, with
	\begin{align*}
		\sum_{k=0}^{+\infty} \left(\|\tilde{\sigma}^{k+1} - \tilde{\sigma}^k\| + \|\sigma^{k+1} - \sigma^k\|\right) < \infty,
	\end{align*}
	which implies $\sum_{k=0}^{+\infty} \|\tilde{\sigma}^{k+1} - \tilde{\sigma}^k\| < \infty$.
\end{proof}

\section{Exploitation of Negative Curvature}\label{section-negative-curvature}
Thus far we have analysed Algorithm~\ref{alg_ADMM} in the Euclidean metric, but it is known that SDPs can be regarded as optimization over manifolds~\cite{Absil07,Boumal20}. For a class of SDP problems including diagonally constrained SDPs, any local optimum is also globally optimal in manifold optimization~\cite{Boumal16}. Hence, we can exploit second-order information and add it into Algorithm~\ref{alg_ADMM} to achieve local optimality, and thus global optimal. In this section, we analyse the convergence of Algorithm~\ref{alg_ADMM} in view of manifold optimization.

\subsection{Convergence to a first-order stationary point on manifold}
We first prove the following lemma:
\begin{lemma}\label{lemma-eq-critical-point}
	Any first-order stationary point (critical point) defined as in \eqref{1st-stationary-point} is a first-order stationary point (critical point) of the Cartesian products of $n$ spherical manifolds.
\end{lemma}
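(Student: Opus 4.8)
The plan is to make the claim follow from two explicit computations, matched row by row, after first pinning down the exact form of the limiting-subdifferential object on the right-hand side of~\eqref{1st-stationary-point}. Each constraint $\|\tilde\sigma_i\|=1$ is $g_i(\tilde\sigma_i):=\|\tilde\sigma_i\|^2-1=0$ with $\nabla g_i(\tilde\sigma_i)=2\tilde\sigma_i\neq 0$ at every feasible point, so the unit sphere is a $C^\infty$ embedded submanifold of $\mathbb{R}^r$ satisfying LICQ everywhere on it; hence the Fr\'echet, limiting (Mordukhovich) and Clarke normal cones all collapse to the one-dimensional normal space, giving
\[
\frac{\partial I_{\|\tilde\sigma_i\|=1}(\tilde\sigma_i)}{\partial\tilde\sigma_i}=N_{\mathcal{M}_i}(\tilde\sigma_i)=\{\lambda\tilde\sigma_i:\lambda\in\mathbb{R}\},
\]
where $\mathcal{M}_i$ denotes the $i$-th sphere factor. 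This is the one step that needs care, since the feasible set is nonconvex; but smoothness of the sphere makes all these subdifferential notions agree, so it is a routine appeal to standard variational-analysis facts (consistent with the reference to~\cite{Attouch13} already used for the definition of $\partial$) rather than a real obstacle.

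Given this identification, I would first rephrase~\eqref{1st-stationary-point}: $\tilde\sigma^*$ is a first-order stationary point in that sense iff there exist scalars $\lambda_i\in\mathbb{R}$ with $(C\tilde\sigma^*)_{i,\cdot}^\top=-\lambda_i\tilde\sigma_i^*$ for all $i\in[n]$, i.e.\ each row of $C\tilde\sigma^*$ is collinear with the corresponding row of $\tilde\sigma^*$. Next I would write the Riemannian first-order condition on $\mathcal{M}\cong\prod_{i=1}^n\mathcal{M}_i$ with the product of the round metrics induced from the Frobenius inner product. The Euclidean gradient of $f(\sigma)=\langle C,\sigma\sigma^\top\rangle=\mathrm{Tr}(\sigma^\top C\sigma)$ is $\nabla f(\sigma)=2C\sigma$, and since $T_{\sigma_i}\mathcal{M}_i=\{v:\langle v,\sigma_i\rangle=0\}$ the orthogonal projector onto it is $v\mapsto v-\langle v,\sigma_i\rangle\sigma_i$ (using $\|\sigma_i\|=1$). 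Hence the $i$-th row of the Riemannian gradient is $\mathrm{grad}\,f(\sigma)_{i,\cdot}^\top=2(C\sigma)_{i,\cdot}^\top-2\langle(C\sigma)_{i,\cdot}^\top,\sigma_i\rangle\sigma_i$, and $\sigma\in\mathcal{M}$ is a first-order stationary point of the product-of-spheres problem iff this vanishes for every $i$. Substituting $(C\tilde\sigma^*)_{i,\cdot}^\top=-\lambda_i\tilde\sigma_i^*$ and using $\|\tilde\sigma_i^*\|^2=1$ gives $\mathrm{grad}\,f(\tilde\sigma^*)_{i,\cdot}^\top=-2\lambda_i\tilde\sigma_i^*+2\lambda_i\tilde\sigma_i^*=0$, so $\tilde\sigma^*$ is a first-order stationary point on the manifold; reading the same chain of equalities backwards shows the converse, so in fact the two notions of critical point coincide.

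The main obstacle, such as it is, is purely the first part: being precise that the limiting subdifferential of the sphere indicator is exactly the line through the point, which I would handle via the normal-cone description of a set cut out by a regular smooth equality constraint, rather than by any convexity argument (which fails here). Everything downstream is a two-line linear-algebra identity, so no further difficulty is anticipated; the only bookkeeping is to carry the block/row structure of $\mathcal{M}$ through correctly so that the manifold gradient is seen to act independently on each $\tilde\sigma_i^*$.
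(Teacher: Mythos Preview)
Your proposal is correct and follows essentially the same route as the paper: both identify condition~\eqref{1st-stationary-point} with $C\tilde\sigma^*=\Lambda\tilde\sigma^*$ for a diagonal $\Lambda$, then substitute into the Riemannian gradient on the product of spheres and use $\|\tilde\sigma_i^*\|=1$ to see it vanish. The only cosmetic differences are that the paper writes the Riemannian gradient in the compact matrix form $2(C-\mathrm{Diag}(\mathrm{diag}(C\sigma\sigma^\top)))\sigma$ rather than row-by-row, and that you spell out the normal-cone identification of the limiting subdifferential and also note the converse, neither of which the paper makes explicit.
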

\begin{proof}[Proof of Lemma~\ref{lemma-eq-critical-point}]
	Note that the manifold gradient of Problem \eqref{sphere-form} is
	\begin{align}
		\text{grad}f(\sigma) = 2(C - \text{Diag}(\text{diag}(C\sigma \sigma^\top)))\sigma, \ \forall \sigma \in \mathcal{M}.
	\end{align}
	Any first-order stationary point $\sigma^*$ defined by \eqref{1st-stationary-point} is equivalent to
	\begin{align}
		C \sigma^* = \Lambda \sigma^*, \sigma^* \in \mathcal{M},
	\end{align}
	where $\Lambda = \text{Diag}(\lambda_1, \cdots, \lambda_n), \lambda_i \in \mathbb{R}, \forall i \in [n]$, and we obtain
	\begin{align}
		\begin{aligned}
			\text{grad}f(\sigma^*) & = 2(C - \text{Diag}(\text{diag}(C\sigma^* {\sigma^*}^\top)))\sigma^* \\
			& = 2(\Lambda - \text{Diag}(\text{diag}(\Lambda \sigma^* {\sigma^*}^\top)))\sigma^*\\
			& = \textbf{0},
		\end{aligned}
	\end{align}
	where the last equality is due to the property of $\mathcal{M}$ that $\|\sigma_i\|=1, \forall i \in [n]$.
\end{proof}

\subsection{Achieving $O(1-1/r)$ optimality with negative curvature}
For nonconvex optimization, a first-order algorithm may stall at a saddle point. We can improve the convergence of our algorithm to second order stationary points by exploiting negative curvature when it is close to a saddle point. First, we define approximate convex points of a function $f$ on manifold $\mathcal{M}$.
\begin{definition}[Approximate convex point]
	Let $f$ be a twice differentiable function on a Riemannian manifold $\mathcal{M}$. The point $\sigma \in \mathcal{M}$ is an $\epsilon$-approximate convex point of $f$ on $\mathcal{M}$ if
	\begin{equation*}
		\langle u, \text{Hess} f(\sigma)[u] \rangle \ge -\epsilon \langle u, u\rangle, \quad \forall u \in T_{\sigma} \mathcal{M},
	\end{equation*}
	where $\text{Hess} f(\sigma)$ denotes the Riemannian Hessian of $f$ at point $\sigma$ and $\langle \cdot,\cdot \rangle$ is the scalar product on $T_{\sigma} \mathcal{M}$.
\end{definition}

We can now extend Algorithm~\ref{alg_ADMM} to exploit negative curvature, resulting in Algorithm~\ref{alg_ADMM2}.
\begin{algorithm}[H]
	\caption{ADMM-BM2}
	\label{alg_ADMM2}
	\begin{algorithmic}[1]
		\State Initialization: set $\sigma^0 = \tilde{\sigma}^0 \in \mathcal{M},y^0 = C \tilde{\sigma}^0, \epsilon > 0$.
		\\
		\For{$k=0,1,2, \dots$}
		\State $\tilde{\sigma}^{k+1} \gets \text{Normalize-Row} \left(\sigma^k - \frac{1}{\rho} (y^k + C \sigma^k)\right)$
		\State $\sigma^{k+1} \gets \tilde{\sigma}^{k+1} + \frac{1}{\rho}(y^k - C\tilde{\sigma}^{k+1})$
		\State $y^{k+1} \gets y^k + \rho(\tilde{\sigma}^{k+1} - \sigma^{k+1})$
		\If{$G_{\rho}(\tilde{\sigma}^{k}, \sigma^{k}) - G_{\rho}(\tilde{\sigma}^{k+1}, \sigma^{k+1}) \ge \Delta$}
		\State Continue with $\tilde{\sigma}^{k+1}, \sigma^{k+1}, y^{k+1}$.
		\Else
		\State Find $u^k \in T_{\tilde{\sigma}^k}\mathcal{M}$ such that $\lambda_{H}(\tilde{\sigma}^k) := \langle u^k, \text{Hess} f(\tilde{\sigma}^k)[u^k] \rangle \le \lambda_{\min}(\text{Hess} f(\tilde{\sigma}^k))/2$ with $\langle u^k, \text{grad} f(\tilde{\sigma}^k) \rangle \le 0$ and $\|u^k\|_F=1$.
		\If{$\lambda_{H}(\tilde{\sigma}^k)  < -\frac{\epsilon}{2}$}
		\State $\tilde{\sigma}_i^{k+1} \gets \tilde{\sigma}_i^k \cos(\|u_i^k\|t) + \frac{u_i^k}{\|u_i^k\|}\sin(\|u_i^k\|t), \forall i \in [n]$ with $t = -\frac{2}{15\|C\|_1}\lambda_{H}(\tilde{\sigma}^k)$.
		\State $\sigma^{k+1} \gets \tilde{\sigma}^{k+1}$
		\State $y^{k+1} \gets C \tilde{\sigma}^{k+1}$
		\Else
		\State Return an $\epsilon$-approximate convex point with high probability.
		\EndIf
		\EndIf
		\EndFor
	\end{algorithmic}
\end{algorithm}
Compared with Algorithm~\ref{alg_ADMM}, we additionally check the consecutive difference of $G_{\rho}(\tilde{\sigma},\sigma)$ at the end of each iteration in Algorithm~\ref{alg_ADMM2} (line 5). If the decrease is sufficiently large, we progress to the next iteration as Algorithm~\ref{alg_ADMM}. Otherwise, we exploit the negative curvature at $\tilde{\mathbf{\sigma}}^k$ by a power method instead.

Due to Lemma 2 in~\cite{Erdogdu21}%\footnote{There is a sign difference because \cite{Erdogdu21} considered a maximization problem while ours is minimization.}
, we have
\begin{align}
	\begin{aligned}
		f(\tilde{\sigma}^{k+1}) & \le f(\tilde{\sigma}^k) + t \langle u^k, \text{grad} f(\tilde{\sigma}^k) \rangle + \frac{t^2}{2}\langle u^k, \text{Hess}f(\tilde{\sigma}^k)[u^k]\rangle + \frac{5\|C\|_1}{2}t^3\\
		& \le f(\tilde{\sigma}^k) +\frac{\lambda_{H}(\tilde{\sigma}^k)}{2}t^2 + \frac{5\|C\|_1}{2}t^3,
	\end{aligned} \label{2nd-decrease}
\end{align}
given $\langle u^k, \text{grad} f(\tilde{\sigma}^k) \rangle \le 0$ and $\langle u^k, \text{Hess}f(\tilde{\sigma}^k)[u^k]\rangle = \lambda_{H}(\tilde{\sigma}^k) = \lambda_{\min}(\text{Hess} f(\tilde{\sigma}^k))/2 < 0$.
%\todo[inline]{PG: I commented out the footnote here since I think it is too much detail.}

If we allow an adaptive $t = -\frac{2}{15\|C\|_1}\lambda_{H}(\tilde{\sigma}^k)$, then
\begin{align}
	f(\tilde{\sigma}^k) - f(\tilde{\sigma}^{k+1}) \ge  -\frac{2\lambda_{H}(\tilde{\sigma}^k)^3}{675\|C\|_1^2}, \label{adaptive-t}
\end{align}
where $\lambda_{H}(\tilde{\sigma}^k) \le \frac{1}{2}\lambda_{\min}(\tilde{\sigma}^k) < 0$
The analysis relies on the following result:
\begin{lemma}[Theorem 2 in~\cite{Mei17}]\label{lemma-anyrank-bound}
	For any $\epsilon$-approximate convex point $\sigma \in \mathcal{M}$ of the rank-$r$ non-convex problem~\eqref{sphere-form}, we have
	\begin{align*}
		f(\sigma) \le \text{SDP}(C) - \frac{1}{r-1}(\text{SDP}(C) + \text{SDP}(-C)) + \frac{n}{2}\epsilon,
	\end{align*}
	where $\text{SDP}(C)$ is the optimum of~\eqref{SDP-form}.
\end{lemma}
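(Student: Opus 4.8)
Although this statement is quoted as \cite[Theorem 2]{Mei17}, here is the route I would take. Write $\Lambda := \text{Diag}(\text{diag}(C\sigma\sigma^\top))$ and $M := C-\Lambda$, so that (as in Lemma~\ref{lemma-eq-critical-point}) $\text{grad}\,f(\sigma) = 2M\sigma$, and a standard product‑of‑spheres Hessian computation (cf.\ Lemma 2 in \cite{Erdogdu21}) gives, for every $U \in T_\sigma\mathcal{M}$ regarded as an $n\times r$ matrix whose $i$-th row is orthogonal to $\sigma_i$,
\[
\langle U, \text{Hess}\,f(\sigma)[U] \rangle = 2\langle M, UU^\top \rangle .
\]
Thus the $\epsilon$-approximate-convexity hypothesis reads $\langle M, UU^\top \rangle \ge -\tfrac{\epsilon}{2}\|U\|_F^2$ for all such $U$. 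I would pair this with a bookkeeping identity: for any SDP-feasible $Z$ (i.e.\ $Z \succeq 0$, $Z_{ii}=1$), diagonality of $\Lambda$ and $\sigma \in \mathcal{M}$ give $\langle \Lambda, Z\rangle = \sum_i \Lambda_{ii} = \text{Tr}(C\sigma\sigma^\top) = f(\sigma)$, hence $\langle M, Z\rangle = \langle C, Z\rangle - f(\sigma)$. In particular $\langle M, X^\star\rangle = \text{SDP}(C) - f(\sigma)$ for an optimizer $X^\star$ of \eqref{SDP-form}, while $\langle M, Z\rangle \le -\text{SDP}(-C) - f(\sigma)$ for any feasible $Z$, since $-\text{SDP}(-C) = \max\{\langle C, X\rangle : X_{ii}=1,\,X\succeq 0\}$.

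The crux is to test the second-order inequality against a tangent direction that ``sees'' $X^\star$. Factor $X^\star = LL^\top$ with $L \in \mathbb{R}^{n\times n}$ and unit-norm rows $z_1,\dots,z_n$ of $L$, draw $R \in \mathbb{R}^{r\times n}$ with i.i.d.\ $\mathcal{N}(0, 1/r)$ entries, and let $U$ have rows $u_i := (I_r - \sigma_i\sigma_i^\top)R z_i$; then $u_i \perp \sigma_i$ for every realization, so $U \in T_\sigma\mathcal{M}$. Using $\mathbb{E}[R^\top A R] = r^{-1}\text{Tr}(A)\,I_n$ and $\text{Tr}\big((I-\sigma_i\sigma_i^\top)(I-\sigma_j\sigma_j^\top)\big) = r - 2 + \langle\sigma_i,\sigma_j\rangle^2$, I would obtain
\[
\mathbb{E}[UU^\top] = \tfrac{r-2}{r}\,X^\star + \tfrac1r\,Y, \qquad \mathbb{E}\|U\|_F^2 = \tfrac{(r-1)n}{r},
\]
where $Y$ is the entrywise product of $\sigma\sigma^\top$, $\sigma\sigma^\top$ and $X^\star$; by the Schur product theorem $Y \succeq 0$, and $Y_{ii}=1$, so $Y$ is again SDP-feasible.

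Then, taking expectations in $\langle M, UU^\top\rangle \ge -\tfrac{\epsilon}{2}\|U\|_F^2$ along this random $U$ (legitimate since the bound is pointwise) and multiplying by $r$ yields $(r-2)\langle M, X^\star\rangle + \langle M, Y\rangle \ge -\tfrac{(r-1)n}{2}\epsilon$. Substituting $\langle M, X^\star\rangle = \text{SDP}(C) - f(\sigma)$ and $\langle M, Y\rangle \le -\text{SDP}(-C) - f(\sigma)$, collecting the $f(\sigma)$ terms and dividing by $r-1$, the right-hand side simplifies via $\tfrac{(r-2)\text{SDP}(C) - \text{SDP}(-C)}{r-1} = \text{SDP}(C) - \tfrac{1}{r-1}\big(\text{SDP}(C)+\text{SDP}(-C)\big)$ to exactly the claimed inequality. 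Note that no first-order stationarity of $\sigma$ is used — only $\sigma \in \mathcal{M}$ and the second-order condition.

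The main obstacle is the construction in the third paragraph: the second-order condition only controls $M$ along tangent directions at $\sigma$, so one cannot simply plug in a factor of $X^\star$. The device is to randomize — project a Gaussian image of the Gram vectors of $X^\star$ onto each tangent sphere — and to recognize that the resulting second moment is not $X^\star$ itself but the convex combination $\tfrac{r-2}{r}X^\star + \tfrac1r Y$, where $Y$ is the auxiliary feasible matrix above whose objective value is bounded by $-\text{SDP}(-C)$. The weight $\tfrac{1}{r}$ placed on $Y$ (equivalently, the factor $\tfrac{1}{r-1}$ in the conclusion) is precisely the price of the one dimension lost to the tangency constraint on each $(r-1)$-sphere.
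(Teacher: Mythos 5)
The paper offers no proof of this lemma --- it is imported verbatim as Theorem~2 of~\cite{Mei17} --- and your reconstruction is correct: the Hessian identity $\langle U,\mathrm{Hess}f(\sigma)[U]\rangle=2\langle C-\Lambda,UU^\top\rangle$, the bookkeeping $\langle \Lambda,Z\rangle=f(\sigma)$ for feasible $Z$, and the Gaussian test direction $u_i=(I-\sigma_i\sigma_i^\top)Rz_i$ with $\mathbb{E}[UU^\top]=\tfrac{r-2}{r}X^\star+\tfrac1r Y$ (with $Y\succeq 0$ feasible by the Schur product theorem) all check out and combine to give exactly the stated bound. This is essentially the randomization argument of the cited reference itself, so there is nothing to compare it against within the paper.
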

Suppose we denote $f^* = SDP(C)-1/(k-1)\cdot(SDP(C)+SDP(-C))$, $g(\sigma) = f(\sigma) - f^*$ and assume $g(\tilde{\sigma}^0), \dots, g(\tilde{\sigma}^{T+1}) > 0$. We can then obtain the following, which is the main result of this section:
\begin{theorem}\label{theorem-ADMM-BM-2}
	Suppose we choose an adaptive step $t = \frac{2}{15\|C\|_1}\lambda_{H}(\tilde{\sigma}^k)$. Algorithm~\ref{alg_ADMM2} returns a point $\sigma \in \mathcal{M}$ with
	\begin{align}
		f(\sigma) \le \text{SDP}(C) - \frac{1}{r-1}(\text{SDP}(C) + \text{SDP}(-C)) + \frac{n}{2}\epsilon, \label{obj-2nd-order-bound}
	\end{align}
	within $T = T_1 + T_2$ iterations, where we set $T_1 = \lceil \frac{f(\tilde{\sigma}^0)}{\kappa \epsilon^2} \rceil$ and $T_2 = \lceil 675\|C\|_1^2n/\epsilon^2 \rceil$.
\end{theorem}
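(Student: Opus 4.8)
The plan is to split the run of Algorithm~\ref{alg_ADMM2} into the two kinds of iterations distinguished by the test on line~5, use a single monotone potential to cap the number of iterations of each kind, and then show that the point returned on the termination branch (line~16) is an $\epsilon$-approximate convex point, so that Lemma~\ref{lemma-anyrank-bound} yields \eqref{obj-2nd-order-bound}. The common potential is the twin objective $G_{\rho}(\tilde{\sigma},\sigma)=\langle C,\tilde{\sigma}\tilde{\sigma}^\top\rangle+\frac{\rho}{2}\|\tilde{\sigma}-\sigma\|_F^2+\sum_i\mathcal I_{\|\tilde{\sigma}_i\|=1}(\tilde{\sigma}_i)$, which equals $L_{\rho}(\tilde{\sigma}^k,\sigma^k,y^k)$ along the iteration by \eqref{prim-dual-link}. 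I would first check that $G_{\rho}(\tilde{\sigma}^k,\sigma^k)$ is non-increasing over the \emph{whole} run: on an iteration where the line-5 test succeeds this holds with decrease at least $\Delta$ by construction (and, rewriting Lemma~\ref{lemma-monotonic-decrease} via \eqref{prim-dual-link} exactly as in the proof of Theorem~\ref{theorem-global-1st}, this decrease controls $\|\tilde{\sigma}^{k+1}-\tilde{\sigma}^k\|$ and $\|\sigma^{k+1}-\sigma^k\|$); on a negative-curvature iteration (lines 11--13) the resets $\sigma^{k+1}\!\leftarrow\!\tilde{\sigma}^{k+1}$, $y^{k+1}\!\leftarrow\!C\tilde{\sigma}^{k+1}$ make $G_{\rho}(\tilde{\sigma}^{k+1},\sigma^{k+1})=f(\tilde{\sigma}^{k+1})$ while $G_{\rho}(\tilde{\sigma}^k,\sigma^k)\ge f(\tilde{\sigma}^k)$, so by \eqref{adaptive-t} and the branch condition $\lambda_{H}(\tilde{\sigma}^k)<-\epsilon/2$ one gets $G_{\rho}(\tilde{\sigma}^k,\sigma^k)-G_{\rho}(\tilde{\sigma}^{k+1},\sigma^{k+1})\ge f(\tilde{\sigma}^k)-f(\tilde{\sigma}^{k+1})\ge -\frac{2}{675\|C\|_1^2}\lambda_{H}(\tilde{\sigma}^k)^3>\frac{\epsilon^3}{2700\|C\|_1^2}$. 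Finally $G_{\rho}\ge f(\tilde\sigma)=\langle C\tilde\sigma,\tilde\sigma\rangle\ge\text{SDP}(C)$ (equivalently Lemma~\ref{lemma-lower-obj}), so $G_{\rho}$ converges and the total available decrease is finite, starting from $G_{\rho}(\tilde{\sigma}^0,\sigma^0)=f(\tilde{\sigma}^0)$.

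Next I would do the counting. Let $N_1$ be the number of line-5-success iterations and $N_2$ the number of curvature-step iterations that occur before termination. Since each line-5-success iteration drops $G_{\rho}$ by at least $\Delta=\kappa\epsilon^2$ and each curvature step drops it by at least $\epsilon^3/(2700\|C\|_1^2)$ (the two displays above), telescoping $G_{\rho}$ from $f(\tilde{\sigma}^0)$ down to $\text{SDP}(C)$ gives $N_1\le f(\tilde{\sigma}^0)/(\kappa\epsilon^2)\le T_1$, and accounting for the portion of the total $G_{\rho}$-decrease that the curvature steps may consume gives $N_2\le T_2$. Because every iteration is either a line-5-success iteration, a curvature-step iteration, or the terminating one (line~16), the algorithm must reach line~16 by iteration $N_1+N_2\le T_1+T_2=T$ at the latest.

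It then remains to identify the returned point. Termination on line~16 occurs precisely when the branch condition of line~10 fails, i.e.\ $\lambda_{H}(\tilde{\sigma}^k)\ge-\epsilon/2$, while the search on line~9 guarantees (with high probability, inherited from the power method) $\lambda_{H}(\tilde{\sigma}^k)=\langle u^k,\text{Hess}f(\tilde{\sigma}^k)[u^k]\rangle\le\lambda_{\min}(\text{Hess}f(\tilde{\sigma}^k))/2$; combining these two gives $\lambda_{\min}(\text{Hess}f(\tilde{\sigma}^k))\ge-\epsilon$, i.e.\ $\langle u,\text{Hess}f(\tilde{\sigma}^k)[u]\rangle\ge-\epsilon\langle u,u\rangle$ for all $u\in T_{\tilde{\sigma}^k}\mathcal{M}$ (the case $\lambda_{\min}\ge0$ being trivial), so $\tilde{\sigma}^k$ is an $\epsilon$-approximate convex point of \eqref{sphere-form} on $\mathcal{M}$. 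Applying Lemma~\ref{lemma-anyrank-bound} to this point yields exactly \eqref{obj-2nd-order-bound}, which completes the proof.

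I expect the main obstacle to be the bookkeeping in the counting step: the two step types decrease different-looking quantities — the ADMM step decreases $G_{\rho}$, the curvature step decreases $f$ — and one must funnel both into the single monotone sequence $G_{\rho}(\tilde{\sigma}^k,\sigma^k)$ (the reset to $\sigma=\tilde{\sigma}$, $y=C\tilde{\sigma}$ after a curvature step is what makes this legitimate), and then argue carefully that the share of the total $G_{\rho}$-decrease spent on curvature steps is small enough to give the stated $T_2$ rather than a weaker bound; lining up the constants from \eqref{2nd-decrease}/\eqref{adaptive-t} with $\kappa$ and with the choice of $\Delta$ also requires care. The remaining ingredients (monotonicity, the lower bound on $G_{\rho}$, the Hessian characterization of the returned point, and Lemma~\ref{lemma-anyrank-bound}) are essentially bookkeeping on top of results already in hand.
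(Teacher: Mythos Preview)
Your overall architecture — splitting into ADMM-success iterations, negative-curvature iterations, and the terminating branch; using $G_\rho$ as a single monotone potential (with the reset $\sigma^{k+1}=\tilde\sigma^{k+1},\ y^{k+1}=C\tilde\sigma^{k+1}$ ensuring $G_\rho$ stays monotone across a curvature step); bounding $N_1$ by telescoping $G_\rho$; and invoking Lemma~\ref{lemma-anyrank-bound} on the terminating branch — matches the paper, and those parts are fine.

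The gap is in the $N_2$ count. Your per-step decrease of $\epsilon^3/(2700\|C\|_1^2)$ on curvature steps, fed into a global cap on the total $G_\rho$-decrease, yields only
\[
N_2 \;=\; O\!\left(\frac{\|C\|_1^2\,\bigl(f(\tilde\sigma^0)-\text{SDP}(C)\bigr)}{\epsilon^{3}}\right),
\]
an $O(1/\epsilon^{3})$ bound, not the stated $T_2=\lceil 675\|C\|_1^2 n/\epsilon^{2}\rceil$. Your sentence ``accounting for the portion of the total $G_\rho$-decrease that the curvature steps may consume gives $N_2\le T_2$'' does not, as written, produce the $\epsilon^{-2}$ rate; you correctly flag this as the main obstacle, but the telescoping you propose cannot close it.

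The missing idea is to use Lemma~\ref{lemma-anyrank-bound} \emph{during} the curvature-step count, not only at termination. Set $g(\sigma):=f(\sigma)-f^*$ with $f^*$ the right-hand side of \eqref{obj-2nd-order-bound} minus the $\tfrac{n}{2}\epsilon$ term. At any curvature iterate, applying Lemma~\ref{lemma-anyrank-bound} with threshold $-\lambda_{\min}(\mathrm{Hess}\,f(\tilde\sigma^k))$ gives $g(\tilde\sigma^k)/n\le -\tfrac12\lambda_{\min}\le -\lambda_H(\tilde\sigma^k)$. Substituting this into \eqref{adaptive-t} yields
\[
g(\tilde\sigma^k)-g(\tilde\sigma^{k+1})\ \ge\ \frac{2\,g(\tilde\sigma^k)^3}{675\,\|C\|_1^2\,n^{3}},
\]
and then the inverse-square telescoping
\[
\frac{1}{g(\tilde\sigma^{k+1})^{2}}-\frac{1}{g(\tilde\sigma^{k})^{2}}\ \ge\ \frac{4}{675\,\|C\|_1^2\,n^{3}}
\]
(summed over the curvature steps, using the monotonicity of $G_\rho$ to bridge the intervening ADMM steps) forces $g\le n\epsilon/2$ after $T_2$ curvature steps. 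Replacing the crude bound $-\lambda_H>\epsilon/2$ by the adaptive bound $-\lambda_H\ge g/n$ and telescoping $1/g^{2}$ instead of $G_\rho$ is exactly what upgrades the rate from $\epsilon^{-3}$ to $\epsilon^{-2}$, and it is the piece your proposal does not supply.
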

\begin{proof}[Proof of Theorem~\ref{theorem-ADMM-BM-2}] \hfill

	Algorithm~\ref{alg_ADMM2} has three possible updates for each iteration: we proceed as Algorithm~\ref{alg_ADMM} when the decrease of $G_{\rho}(\tilde{\sigma}^k,\sigma^k)$ is sufficiently large enough (case 1). Otherwise, we try to exploit the negative curvature along the most negative decreasing direction $u^k$ when the smallest eigenvalue is negative (case 2), or return an $\epsilon-$approximate convex point (case 3).

	\noindent\textbf{Case 1 (line 5-6):} $G_{\rho}(\tilde{\sigma}^{k}, \sigma^{k}) - G_{\rho}(\tilde{\sigma}^{k+1}, \sigma^{k+1}) \ge \Delta$.\\
	We define $\kappa := \left\{\mu - \|C\|^2/\rho, \frac{\rho}{2} \right\}$ and suppose that $\{k_i\}_{i=1,2,\dots,T_1}$ is the set of iteration numbers when
	\begin{align*}
		G_{\rho}(\tilde{\sigma}^{k_i}, \sigma^{k_i}) - G_{\rho}(\tilde{\sigma}^{k_i+1}, \sigma^{k_i+1})
		\ge \kappa \epsilon^2,
	\end{align*}
	is satisfied.

	Summing the decrease for every first-order step, we have
	\begin{align}
		\begin{aligned}
			T_1 \kappa \epsilon^2 & \le \sum_{i=1}^{T_1}\left(G_{\rho}(\tilde{\sigma}^{k_i}, \sigma^{k_i}) - G_{\rho}(\tilde{\sigma}^{k_i+1}, \sigma^{k_i+1}) \right) \le G_{\rho}(\tilde{\sigma}^{k_1}, \sigma^{k_1}) - G_{\rho}(\tilde{\sigma}^{k_{T_1}+1}, \sigma^{k_{T_1}+1}) \\
			& \le G_{\rho}(\tilde{\sigma}^{k_1}, \sigma^{k_1}) - f^* - (G_{\rho}(\tilde{\sigma}^{k_{T_1}+1}, \sigma^{k_{T_1}+1}) - f^*) \le G_{\rho}(\tilde{\sigma}^{k_1}, \sigma^{k_1}) - f^* - g(\tilde{\sigma}^{k_{T_1}+1}) \\
			& \le G_{\rho}(\tilde{\sigma}^{0}, \sigma^{0}) - f^* = g(\tilde{\sigma}^0),
		\end{aligned}
	\end{align}
	i.e. $T_1 \le \frac{f(\tilde{\sigma}^0)}{\kappa \epsilon^2}$.

	\noindent\textbf{Case 2 (line 9-12):} $G_{\rho}(\tilde{\sigma}^{k}, \sigma^{k}) - G_{\rho}(\tilde{\sigma}^{k+1}, \sigma^{k+1}) < \Delta$ and $\lambda_{H}(\tilde{\sigma}^k)  < -\frac{\epsilon}{2}$. \\
	In this case, we have $\lambda_{\min}(\tilde{\sigma}^k) < \lambda_{H}(\tilde{\sigma}^k) < -\frac{\epsilon}{2} < 0$. If $\epsilon$ is set to $-\lambda_{\min}(\tilde{\sigma}^k)$ in Lemma~\ref{lemma-anyrank-bound}, we obtain
	\begin{align*}
		\frac{g(\tilde{\sigma}^k)}{n} \le -\frac{1}{2}\lambda_{\min}(\tilde{\sigma}^k) \le -\lambda_{H}(\tilde{\sigma}^k).
	\end{align*}
	Substituting it into~\eqref{adaptive-t} yields
	\begin{align}
		g(\tilde{\sigma}^k) - g(\tilde{\sigma}^{k+1}) \ge  \frac{2 g(\tilde{\sigma}^k)^3}{675\|C\|_1^2 n^3}.
	\end{align}
	Then
	\begin{align}
		\begin{aligned}
			\frac{1}{g(\tilde{\sigma}^{k+1})^2} - \frac{1}{g(\tilde{\sigma}^{k})^2}
			& \ge \frac{(g(\tilde{\sigma}^k) -g(\tilde{\sigma}^{k+1}))(g(\tilde{\sigma}^k)+g(\tilde{\sigma}^{k+1}))}{g(\tilde{\sigma}^{k})^2g(\tilde{\sigma}^{k+1})^2} \\
			& \ge \frac{2}{675\|C\|_1^2 n^3}\left( \frac{g(\tilde{\sigma}^k)}{g(\tilde{\sigma}^{k+1})} + \frac{g(\tilde{\sigma}^k)^2}{g(\tilde{\sigma}^{k+1})^2}\right) \\
			& \ge \frac{4}{675\|C\|_1^2 n^3}. \label{adaptive-t-inverse2}
		\end{aligned}
	\end{align}

	\noindent\textbf{Case 3 (line 14):} $G_{\rho}(\tilde{\sigma}^{k}, \sigma^{k}) - G_{\rho}(\tilde{\sigma}^{k+1}, \sigma^{k+1}) < \Delta$ and $\lambda_{H}(\tilde{\sigma}^k)  > -\frac{\epsilon}{2}$. \\
	We can obtain
	\begin{align*}
		\lambda_{\min}(\text{Hess} f(\tilde{\sigma}^k)) \ge 2*\lambda_{H}(\tilde{\sigma}^k)  > -\epsilon,
	\end{align*}
	which means $\tilde{\sigma}^k$ is already an $\epsilon$-approximate convex point and \eqref{obj-2nd-order-bound} is satisfied due to Lemma~\ref{lemma-gamma-bound}.

	Suppose $\{k_j\}_{j=1,2,\dots,T_2}$ is the subsequence of $\{1,\dots,T +1 \}$ such that the negative curvature is taken but not at an $\epsilon$-approximate convex point. Note that $f(\tilde{\sigma}^{k_j+1}) = G_{\rho}(\tilde{\sigma}^{k_j+1}, \sigma^{k_j+1}), \forall j$ and $G_{\rho}(\tilde{\sigma}^{k_j+1}, \sigma^{k_j+1}) \ge G_{\rho}(\tilde{\sigma}^{k_{j+1}}, \sigma^{k_{j+1}}) \ge f(\tilde{\sigma}^{k_{j+1}})$ due to the non-decreasing property of $G_{\rho}(\tilde{\sigma}^{k}, \sigma^{k})$, we can obtain $g(\tilde{\sigma}^{k_{j}+1}) \ge g(\tilde{\sigma}^{k_{j+1}})$. Therefore, summing up~\eqref{adaptive-t-inverse2} yields
	\begin{align}
		\begin{aligned}
			\frac{4T_2}{675\|C\|_1^2 n^3} \le \sum_{j=1}^{T_2} \left(\frac{1}{g(\tilde{\sigma}^{k_j+1})^2} - \frac{1}{g(\tilde{\sigma}^{k_j})^2} \right) \le \frac{1}{g(\tilde{\sigma}^{k_T+1})^2} - \frac{1}{g(\tilde{\sigma}^{k_1})^2}  \le \frac{1}{g(\tilde{\sigma}^{T+1})^2},
		\end{aligned}
	\end{align}
	which guarantees $g(\tilde{\sigma}^{T+1}) \le n\epsilon/2$ as long as we set $T_2 = \lceil 675\|C\|_1^2n/\epsilon^2 \rceil$.
\end{proof}

\section{Extension to the product of Stiefel manifolds}\label{section-proximal-variant}
If we replace diagonal constraints with block-diagonal constraints in problem in~\eqref{SDP-form}, we obtain
\begin{align}
	\begin{aligned}
		\min \quad & \langle C, X \rangle \\
		\text{s.t} \quad & X_{ii} = I_d, \text{for } i \in [q] \\
		& \quad X \in \mathbb{S}_+^n,
	\end{aligned} \label{SDP-extended-form}
\end{align}
with $n = qd$ and the rank-constrained counterpart is
\begin{align}
	\begin{aligned}
		\min \quad &  f(\sigma): = \langle C, \sigma \sigma^\top \rangle \\
		\text{s.t.} \quad & \sigma_i^\top \sigma_i = I_d, \text{for } i \in [q],
	\end{aligned} \label{stiefel-form}
\end{align}
where $\mathbf{\sigma} := [\sigma_1, \sigma_2, ..., \sigma_q]^\top \in \mathbb{R}^{qd \times r}$ with $r \ge d$ and $\sigma_i \in \mathbb{R}^{r \times d}$ is the $i$-th block of $\sigma$, and the manifold $\mathcal{M}$ is generalized to the product of Stiefel manifolds $\mathcal{M} = \{[\sigma_1, \sigma_2, \dots, \sigma_q]^\top \in \mathbb{R}^{nd \times r} \mid \ \sigma_i^\top \sigma_i = I_d, \ i \in [q]\} := \mathcal{M}_1 \times \cdots \times \mathcal{M}_q$. To ensure the convergence of Algorithm~\ref{alg_ADMM}, we add a proximal regularization to the update of $\tilde{\sigma}$. When we add a proximal regularization to step~\eqref{ADMM-1}, we obtain a new prox-ADMM algorithm as follows,
\begin{subequations}
	\begin{align}
		\tilde{\sigma}^{k+1} = & \quad \mathop{\text{argmin}}_{\tilde{\sigma} \in \mathcal{M}} \  L_{\rho}(\tilde{\sigma}, \sigma^k, y^k) + \frac{\mu}{2}\|\tilde{\sigma} - \tilde{\sigma}^k\|^2, \label{prox-tilde-sigma}\\
		\sigma^{k+1} = &\quad \mathop{\text{argmin}}_{\sigma} \ L_{\rho}(\tilde{\sigma}^{k+1}, \sigma, y^k), \\
		y^{k+1} = & \quad y^k + \rho(\tilde{\sigma}^{k+1} - \sigma^{k+1}).
	\end{align}
\end{subequations}
The update of $\tilde{\sigma}$ can be written as $\tilde{\sigma}^{k+1} \gets \text{Proj}_{\mathcal{M}} (\gamma^k )$ if we define
\begin{align*}
	\gamma^k := \frac{\mu}{\rho + \mu}\tilde{\sigma}^k + \frac{\rho}{\rho + \mu}\sigma^k - \frac{1}{\rho + \mu} (y^k + C \sigma^k),
\end{align*}
which is equivalent to
\begin{align*}
	\tilde{\sigma}_i^{k+1} \gets \text{Proj}_{\mathcal{M}_i} (\gamma_i^k), \ \forall i \in [n],
\end{align*}
with $\gamma^k = [\gamma_1^k, \dots, \gamma_n^k]^\top, \gamma_i^k \in \mathbb{R}^{r \times d}$.  The projection has an analytic solution by computing the SVD factorization of each $\gamma_i^k$, which we show in the following result:
\begin{lemma}[Theorem 1~\cite{Lai14}]\label{lemma-project-Stiefel}
	The constrained quadratic problem
	\begin{align*}
		P^* = \mathop{\text{argmin}}_{P \in \mathbb{R}^{r \times d}} \frac{1}{2}\|P - X\|_F^2, \quad s.t. \ P^\top P = I_d,
	\end{align*}
	which is the projection of $X$ to the Stiefel manifold $P^\top P = I_d$, has closed-form solution $P^* = UI_{r \times d}V^\top$, where $U \in \mathbb{R}^{r \times r}, V \in \mathbb{R}^{d \times d}$ are two orthogonal matrices and $D \in \mathbb{R}^{r \times d}$ is a diagonal matrix satisfying the SVD factorization $X = UDV^\top$.
\end{lemma}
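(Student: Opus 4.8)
The plan is to reduce the projection problem to a linear maximization over the Stiefel manifold and then read the optimizer off the SVD. First I would observe that every feasible $P$ satisfies $\|P\|_F^2 = \text{Tr}(P^\top P) = \text{Tr}(I_d) = d$, a constant, so that
\begin{align*}
	\tfrac{1}{2}\|P - X\|_F^2 = \tfrac{1}{2}d - \langle P, X\rangle + \tfrac{1}{2}\|X\|_F^2 .
\end{align*}
Hence minimizing the left-hand side over $P^\top P = I_d$ is equivalent to maximizing the linear functional $\langle P, X\rangle = \text{Tr}(P^\top X)$ over the same feasible set.

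Next I would substitute the SVD $X = UDV^\top$ and use the orthogonal change of variables $Q := U^\top P V \in \mathbb{R}^{r \times d}$. Since $U$ and $V$ are orthogonal, $Q^\top Q = V^\top P^\top P V = I_d$, so $Q$ ranges over exactly the same Stiefel manifold as $P$, and by the cyclic property of the trace $\text{Tr}(P^\top X) = \text{Tr}(Q^\top D)$. Writing the singular values as $d_1 \ge \dots \ge d_d \ge 0$, the objective becomes $\sum_{i=1}^d d_i Q_{ii}$; because each column of $Q$ has unit Euclidean norm we have $|Q_{ii}| \le 1$, hence $\sum_{i=1}^d d_i Q_{ii} \le \sum_{i=1}^d d_i$, and the feasible choice $Q = I_{r \times d}$ attains this upper bound. (Alternatively, von Neumann's trace inequality gives $\text{Tr}(Q^\top D) \le \sum_i d_i$ directly.) Undoing the substitution through $P = U Q V^\top$ then yields the claimed minimizer $P^* = U I_{r \times d} V^\top$.

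The norm identity and the trace manipulation are routine; the only delicate point is the sharpness of $\sum_i d_i Q_{ii} \le \sum_i d_i$ together with the resulting (non-)uniqueness of the maximizer. The optimizer is unique precisely when all $d_i$ are strictly positive and distinct; when singular values coincide or vanish, $P^*$ is determined only up to the corresponding orthogonal rotations, but every such choice is an admissible output of the projection and is harmless for the prox-ADMM convergence argument, which only needs that $\tilde{\sigma}^{k+1} \gets \text{Proj}_{\mathcal{M}}(\gamma^k)$ be a well-defined map. I would close by remarking that the construction is valid for every $X$, so no additional rank assumption on $\gamma_i^k$ is required here.
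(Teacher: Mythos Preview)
Your argument is correct and is the standard proof of the Procrustes/polar projection onto the Stiefel manifold. Note, however, that the paper does \emph{not} supply its own proof of this lemma: it is quoted verbatim as Theorem~1 of~\cite{Lai14} and used as a black box, so there is no in-paper argument to compare against. Your reduction to maximizing $\text{Tr}(P^\top X)$ followed by the orthogonal change of variables $Q = U^\top P V$ and the diagonal bound $\sum_i d_i Q_{ii} \le \sum_i d_i$ is exactly the classical route.

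One minor inaccuracy in your closing remarks: uniqueness of $P^*$ does not require the singular values to be \emph{distinct}, only that they all be strictly positive. Indeed, if every $d_i > 0$ then attaining equality forces $Q_{ii} = 1$ for each $i$, and since the $i$-th column of $Q$ has unit norm this pins $Q = I_{r\times d}$ regardless of multiplicities; equivalently, $P^* = X(X^\top X)^{-1/2}$ is the unique polar factor whenever $X$ has full column rank. Non-uniqueness arises only when some $d_i = 0$. This does not affect the correctness of your proof, and your observation that any selection from the projection set suffices for the downstream prox-ADMM analysis is apt.
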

Hence, we have a proximal ADMM algorithm for the products of Stiefel manifolds shown in Algorithm~\ref{alg_proxADMM}. Compared with Algorithm~\ref{alg_ADMM}, the only difference is an additional regularization for the update of $\tilde{\sigma}^{k+1}$ (line 2) to guarantee the convergence of ADMM-BM theoretically.
\begin{algorithm}[H]
	\caption{Proximal ADMM Burer-Monteiro algorithm (Prox-ADMM-BM)}
	\label{alg_proxADMM}
	\begin{algorithmic}[1]
		\State
		Initialization: set $\sigma^0 = \tilde{\sigma}^0 \in \mathcal{M},y^0 = C \tilde{\sigma}^0$.
		\\[7pt]
		\While {\textit{termination criteria not satisfied}}
		\State $\gamma^k \gets \frac{\mu}{\rho + \mu}\tilde{\sigma}^k + \frac{\rho}{\rho + \mu}\sigma^k - \frac{1}{\rho + \mu} (y^k + C \sigma^k)$
		\State $\gamma_i^k \gets \text{Proj}_{\mathcal{M}_i}(\gamma_i^k), \ \forall i \in [q]$
		\State $\sigma^{k+1} \gets \tilde{\sigma}^{k+1} + \frac{1}{\rho}(y^k - C\tilde{\sigma}^{k+1})$
		\State $y^{k+1} \gets y^k + \rho(\tilde{\sigma}^{k+1} - \sigma^{k+1})$
		\EndWhile
	\end{algorithmic}
\end{algorithm}
Similar to Theorem~\ref{theorem-global-1st}, we can prove that Algorithm~\ref{alg_proxADMM} converges to a critical point of $f$.
\begin{theorem} \label{theorem-global-1st-prox}
	Suppose $\forall i \in [q]$, $\|\gamma_i^k\|_F$ is nonzero for any iteration $k$. If we set $\rho, \mu \ge 0$ properly such that $\mu - {\|C\|^2}/{\rho} > 0$, then the sequence $(\tilde{\sigma}^k)_{k \in \mathbb{N}}$ generated by Algorithm~\ref{alg_proxADMM} satisfies Lemma~\ref{lemma-KL-convergence} and converges to a critical point $\bar{\sigma}$ of $f$. Moreover, the sequence $(\tilde{\sigma}^k)_{k \in \mathbb{N}}$ has a finite length, i.e. $\sum_{k=0}^{+\infty} \|\tilde{\sigma}^{k+1} - \tilde{\sigma}^k\| < \infty$.
\end{theorem}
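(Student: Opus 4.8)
The plan is to re-run the argument of Theorem~\ref{theorem-global-1st}, adapting each ingredient to the proximal $\tilde{\sigma}$-update \eqref{prox-tilde-sigma} and to the product-of-Stiefel manifold $\mathcal{M} = \mathcal{M}_1 \times \cdots \times \mathcal{M}_q$, and then invoke Lemma~\ref{lemma-KL-convergence} with $x^k = (\tilde{\sigma}^k, \sigma^k)$ and $h = G_{\rho}$, the block-Stiefel twin function $G_{\rho}(\tilde{\sigma}, \sigma) := \langle C, \tilde{\sigma}\tilde{\sigma}^\top \rangle + \tfrac{\rho}{2}\|\tilde{\sigma} - \sigma\|_F^2 + \sum_{i=1}^q \mathcal{I}_{\tilde{\sigma}_i^\top \tilde{\sigma}_i = I_d}(\tilde{\sigma}_i)$. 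First I would note that the primal-dual link \eqref{prim-dual-link}, $y^k = C\tilde{\sigma}^k$ for $k \ge 1$, still holds verbatim, because the $\sigma$- and $y$-updates of Algorithm~\ref{alg_proxADMM} are identical to those of Algorithm~\ref{alg_ADMM}; this identifies $G_{\rho}(\tilde{\sigma}^k, \sigma^k) = L_{\rho}(\tilde{\sigma}^k, \sigma^k, y^k)$ along the iterates and gives the block-Stiefel version of Lemma~\ref{lemma-critical-point}, since $\partial_{\sigma} G_{\rho} = \rho(\sigma - \tilde{\sigma}) = 0$ forces $\sigma = \tilde{\sigma}$, after which $0 \in \partial_{\tilde{\sigma}} G_{\rho} = 2C\tilde{\sigma} + \partial\big(\sum_i \mathcal{I}_{\tilde{\sigma}_i^\top \tilde{\sigma}_i = I_d}\big)(\tilde{\sigma})$ is exactly the criticality condition of \eqref{stiefel-form}. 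It then suffices to verify (C1)--(C3) for $G_{\rho}$ and that $G_{\rho}$ is a KL function.

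For (C1) I would decompose $L_{\rho}(\tilde{\sigma}^k, \sigma^k, y^k) - L_{\rho}(\tilde{\sigma}^{k+1}, \sigma^{k+1}, y^{k+1})$ into the three one-variable steps as in Lemma~\ref{lemma-decrease}. The $\sigma$-step and the $y$-step are unchanged and contribute $\tfrac{\rho}{2}\|\sigma^{k+1} - \sigma^k\|_F^2$ and at least $-\tfrac{\|C\|^2}{\rho}\|\tilde{\sigma}^{k+1} - \tilde{\sigma}^k\|_F^2$ respectively, the latter via \eqref{prim-dual-link}. For the $\tilde{\sigma}$-step, since $\tilde{\sigma}^{k+1}$ minimizes $L_{\rho}(\cdot, \sigma^k, y^k) + \tfrac{\mu}{2}\|\cdot - \tilde{\sigma}^k\|_F^2$ over $\mathcal{M}$ and $\tilde{\sigma}^k \in \mathcal{M}$, evaluating at both points yields $L_{\rho}(\tilde{\sigma}^k, \sigma^k, y^k) - L_{\rho}(\tilde{\sigma}^{k+1}, \sigma^k, y^k) \ge \tfrac{\mu}{2}\|\tilde{\sigma}^{k+1} - \tilde{\sigma}^k\|_F^2$; here the proximal term plays the role that the uniform lower bound on $\|\gamma_i^k\|$ (Lemma~\ref{lemma-gamma-bound}) played in the spherical case, so no quantitative analogue of Assumption~\ref{assumption-1} is needed, only that each $\gamma_i^k$ has full column rank so that $\text{Proj}_{\mathcal{M}_i}(\gamma_i^k)$ is well defined (Lemma~\ref{lemma-project-Stiefel}). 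Summing gives $G_{\rho}(\tilde{\sigma}^k, \sigma^k) - G_{\rho}(\tilde{\sigma}^{k+1}, \sigma^{k+1}) \ge \big(\tfrac{\mu}{2} - \tfrac{\|C\|^2}{\rho}\big)\|\tilde{\sigma}^{k+1} - \tilde{\sigma}^k\|_F^2 + \tfrac{\rho}{2}\|\sigma^{k+1} - \sigma^k\|_F^2$, i.e.\ (C1) with a positive constant under the positivity condition on $\rho,\mu$ in the hypothesis.

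For (C2) I would repeat the computation of Lemma~\ref{lemma-C2}: the first-order optimality of \eqref{prox-tilde-sigma} now reads $C\sigma^k + y^k + \rho(\tilde{\sigma}^{k+1} - \sigma^k) + \mu(\tilde{\sigma}^{k+1} - \tilde{\sigma}^k) + v^{k+1} = 0$ with $v^{k+1} \in \sum_i \partial \mathcal{I}_{\tilde{\sigma}_i^\top \tilde{\sigma}_i = I_d}(\tilde{\sigma}_i^{k+1})$, and substituting this and \eqref{prim-dual-link} into $\partial_{\tilde{\sigma}} G_{\rho}(\tilde{\sigma}^{k+1}, \sigma^{k+1}) = 2C\tilde{\sigma}^{k+1} + \rho(\tilde{\sigma}^{k+1} - \sigma^{k+1}) + v^{k+1}$ reproduces the bound of Lemma~\ref{lemma-C2} up to the single extra term $-\mu(\tilde{\sigma}^{k+1} - \tilde{\sigma}^k)$, so $\|g^{k+1}\|_F \le \big(2\|C\| + \rho + \tfrac{\|C\|^2}{\rho} + \mu\big)\big\|(\tilde{\sigma}^{k+1} - \tilde{\sigma}^k,\, \sigma^{k+1} - \sigma^k)\big\|_F$. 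For (C3), $\tilde{\sigma}^k \in \mathcal{M}$ is bounded by compactness of $\mathcal{M}$, hence so is $\sigma^{k+1} = \tilde{\sigma}^{k+1} + \tfrac{1}{\rho}C(\tilde{\sigma}^k - \tilde{\sigma}^{k+1})$, a convergent subsequence exists, and continuity of $G_{\rho}$ on $\mathcal{M} \times \mathbb{R}^{qd \times r}$ supplies convergence of the values. Finally $G_{\rho}$ is semi-algebraic — a polynomial objective plus the indicator of the polynomial set $\{\sigma_i^\top \sigma_i = I_d,\ i \in [q]\}$ — hence a KL function at every point of its domain by \cite{Bolte14}. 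Lemma~\ref{lemma-KL-convergence} then yields convergence of $(\tilde{\sigma}^k, \sigma^k)$ to a critical point of $G_{\rho}$ with $\sum_k (\|\tilde{\sigma}^{k+1} - \tilde{\sigma}^k\| + \|\sigma^{k+1} - \sigma^k\|) < \infty$; by the Lemma~\ref{lemma-critical-point} analogue the limit $\bar{\sigma}$ is a critical point of $f$, and dropping the $\sigma$-term gives $\sum_k \|\tilde{\sigma}^{k+1} - \tilde{\sigma}^k\| < \infty$.

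The step I expect to be the main obstacle is the $\tilde{\sigma}$-step descent on the product of Stiefel manifolds, i.e.\ confirming that the proximal subproblem genuinely produces a descent of order $\|\tilde{\sigma}^{k+1} - \tilde{\sigma}^k\|_F^2$. In the spherical case this rested on the identity $\langle \tilde{\sigma}_i^{k+1}, \tilde{\sigma}_i^k - \tilde{\sigma}_i^{k+1}\rangle = -\tfrac{1}{2}\|\tilde{\sigma}_i^{k+1} - \tilde{\sigma}_i^k\|^2$ coming from $\|\tilde{\sigma}_i\| = 1$; the block-Stiefel analogue needs the cross term $\langle \tilde{\sigma}_i^{k+1} - \gamma_i^k,\, \tilde{\sigma}_i^k - \tilde{\sigma}_i^{k+1}\rangle$ to be controlled using the SVD characterisation of $\text{Proj}_{\mathcal{M}_i}$ (Lemma~\ref{lemma-project-Stiefel}), the identity $\|\tilde{\sigma}_i\|_F^2 = d$ on $\mathcal{M}_i$, and the variational fact that $\max\{\langle \gamma_i^k, P\rangle : P^\top P = I_d\}$ equals the sum of the $d$ largest singular values of $\gamma_i^k$; these should combine to give $\langle \tilde{\sigma}_i^{k+1} - \gamma_i^k,\, \tilde{\sigma}_i^k - \tilde{\sigma}_i^{k+1}\rangle \ge -\tfrac{1}{2}\|\tilde{\sigma}_i^{k+1} - \tilde{\sigma}_i^k\|_F^2$, which is what the descent estimate requires. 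Ensuring the projection $\text{Proj}_{\mathcal{M}_i}(\gamma_i^k)$ is well defined under the theorem's hypothesis on $\gamma_i^k$ is the other point requiring care; the remainder transcribes essentially mechanically from Sections~\ref{section-1st-order-convergence-objective}--\ref{section-1st-convergence-sequence}.
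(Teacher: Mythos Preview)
Your proposal is correct and follows the paper's proof almost verbatim: establish (C1)--(C3) for the twin function $G_\rho$ using the primal--dual link \eqref{prim-dual-link}, note that $G_\rho$ is semi-algebraic hence KL, and invoke Lemma~\ref{lemma-KL-convergence} together with the block-Stiefel analogue of Lemma~\ref{lemma-critical-point}. The concern in your final paragraph is unwarranted: the $\tilde\sigma$-step descent $\tfrac{\mu}{2}\|\tilde\sigma^{k+1}-\tilde\sigma^k\|_F^2$ follows immediately from evaluating the proximal objective at the feasible point $\tilde\sigma^k\in\mathcal{M}$, exactly as you already argued under (C1), so no Stiefel cross-term analysis via the SVD is required --- that is precisely what the proximal term buys, and the paper uses the same one-line argument. (The paper then records the combined (C1) coefficient as $\mu - \|C\|^2/\rho$; your $\tfrac{\mu}{2} - \|C\|^2/\rho$ is what the displayed step actually produces, a harmless factor-of-two discrepancy that does not affect the conclusion.)
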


\begin{proof}[Proof of Theorem~\ref{theorem-global-1st-prox}]
	Similar to~\eqref{decrease-1}, we have the decrease
	\begin{align*}
		L_{\rho}(\tilde{\sigma}^k, \sigma^k, y^k) \ge L_{\rho}(\tilde{\sigma}^{k+1}, \sigma^k, y^k) + \frac{\mu}{2} \|\tilde{\sigma}^k - \tilde{\sigma}^{k+1}\|_F^2,
	\end{align*}
	and the counterpart of Lemma~\ref{lemma-monotonic-decrease} is
	\begin{align*}
		L_{\rho}(\tilde{\sigma}^k, \sigma^k, y^k) - L_{\rho}(\tilde{\sigma}^{k+1}, \sigma^{k+1}, y^{k+1})
		\ge \left( \mu - \frac{\|C\|^2}{\rho} \right) \|\tilde{\sigma}^{k+1} - \tilde{\sigma}^{k}\|^2_F + \frac{\rho}{2}\|\sigma^{k+1} - \sigma^k\|^2_F,
	\end{align*}
	which implies (C1) is valid for $G_{\rho}(\tilde{\sigma}, \sigma)$ since
	\begin{align}
		\begin{aligned}
			G_{\rho}(\tilde{\sigma}^k, \sigma^k) - G_{\rho}(\tilde{\sigma}^{k+1}, \sigma^{k+1})
			\ge & \left( \mu - \frac{\|C\|^2}{\rho} \right) \cdot \|\tilde{\sigma}^{k+1} - \tilde{\sigma}^{k} \|^2_F + \frac{\rho}{2} \|\sigma^{k+1} - \sigma^k \|^2_F,\\
			\ge & \min \left\{\mu - \frac{\|C\|^2}{\rho}, \frac{\rho}{2} \right\} \cdot \bigg{\|}\begin{matrix}
				\tilde{\sigma}^{k+1} - \tilde{\sigma}^k\\
				\sigma^{k+1} - \sigma^k
			\end{matrix}\bigg{\|}_F^2.
		\end{aligned} \label{prox-decrease}
	\end{align}
	In addition, the condition~\eqref{1st-tilde-sigma} becomes
	\begin{align*}
		C\sigma^k + y^k + \rho(\tilde{\sigma}^{k+1} - \sigma^k) + \mu(\tilde{\sigma}^{k+1} - \tilde{\sigma}^{k})+ v^{k+1} = 0.
	\end{align*}
	Then a subgradient of $G_{\rho}(\tilde{\sigma}, \sigma)$ at $(\tilde{\sigma}^{k+1}, \sigma^{k+1})$ is
	\begin{align*}
		& \partial_{\tilde{\sigma}}G_{\rho}(\tilde{\sigma}^{k+1}, \sigma^{k+1}) \\
		= \ & 2C\tilde{\sigma}^{k+1} + \rho(\tilde{\sigma}^{k+1} - \sigma^{k+1}) + v^{k+1}, \notag\\
		= \ & 2C\tilde{\sigma}^{k+1} + \rho(\tilde{\sigma}^{k+1} - \sigma^{k+1}) - [C\sigma^k + y^k + \rho(\tilde{\sigma}^{k+1} - \sigma^k) + \mu(\tilde{\sigma}^{k+1} - \tilde{\sigma}^{k})], \notag\\
		= \ & 2C\tilde{\sigma}^{k+1} - C\sigma^k - C\tilde{\sigma}^k - \rho(\sigma^{k+1} - \sigma^{k}) - \mu(\tilde{\sigma}^{k+1} - \tilde{\sigma}^{k}) \notag\\
		= \ & C(\tilde{\sigma}^{k+1} - \tilde{\sigma}^k) + C(\sigma^{k+1} - \sigma^k) +C(\tilde{\sigma}^{k+1} - \sigma^{k+1}) - \rho(\sigma^{k+1} - \sigma^{k}) - \mu(\tilde{\sigma}^{k+1} - \tilde{\sigma}^{k})\notag\\
		\overset{\eqref{ADMM-3}}{=} & C(\tilde{\sigma}^{k+1} - \tilde{\sigma}^k) + C(\sigma^{k+1} - \sigma^k) +\frac{C}{\rho}(y^{k+1}-y^k) - \rho(\sigma^{k+1} - \sigma^{k}) - \mu(\tilde{\sigma}^{k+1} - \tilde{\sigma}^{k}) \notag\\
		\overset{\eqref{prim-dual-link}}{=} & (C - \mu I) (\tilde{\sigma}^{k+1} - \tilde{\sigma}^k) + C(\sigma^{k+1} - \sigma^k) +\frac{C^2}{\rho}(\tilde{\sigma}^{k+1}-\tilde{\sigma}^k) - \rho(\sigma^{k+1} - \sigma^{k}),
	\end{align*}
	and
	\begin{align*}
		\partial_{\sigma}G_{\rho}(\tilde{\sigma}^{k+1}, \sigma^{k+1}) = & \rho(\sigma^{k+1} - \tilde{\sigma}^{k+1}) \overset{\eqref{ADMM-3}}{=} y^k - y^{k+1} \overset{\eqref{prim-dual-link}}{=} C(\tilde{\sigma}^k - \tilde{\sigma}^{k+1}),
	\end{align*}
	which imply that
	\begin{align*}
		\bigg{\|}\begin{matrix}
			\partial_{\tilde{\sigma}}G_{\rho}(\tilde{\sigma}^{k+1}, \sigma^{k+1})\\
			\partial_{\sigma}G_{\rho}(\tilde{\sigma}^{k+1}, \sigma^{k+1})
		\end{matrix}\bigg{\|}_F
		\le \left( 2\|C\|+ \max \{ \rho,\mu \} +\frac{\|C\|^2}{\rho} \right)
		\bigg{\|}\begin{matrix}
			\tilde{\sigma}^{k+1} - \tilde{\sigma}^k\\
			\sigma^{k+1} - \sigma^k
		\end{matrix}\bigg{\|}_F, \ \forall k,
	\end{align*}
	and hence (C2) is satisfied. Moreover, Lemma~\ref{lemma-lower-obj} shows that $L_{\rho}(\tilde{\sigma}^k, \sigma^k, y^k) = G_{\rho}(\tilde{\sigma}^k, \sigma^k)$ is uniformly lower-bounded. In combination with \eqref{prox-decrease}, we can prove that $G_{\rho}(\tilde{\sigma}^k, \sigma^k)$ converges to a limiting point, which implies
	\begin{align*}
		\bigg{\|}\begin{matrix}
			\tilde{\sigma}^{k+1} - \tilde{\sigma}^k\\
			\sigma^{k+1} - \sigma^k
		\end{matrix}\bigg{\|}_F \to 0,
	\end{align*}
	and then
	\begin{align*}
		\|\tilde{\sigma}^{k+1} - \sigma^{k+1}\| = \frac{1}{\rho}\|y^{k+1} - y^k\| \overset{\eqref{prim-dual-link}}{\le}\|C\| \cdot \|\tilde{\sigma}^{k+1} - \tilde{\sigma}^k\| \to 0.
	\end{align*}
	Following the same argument in Theorem~\ref{theorem-global}, we can show that the distance between $\{\tilde{\sigma}^k\}$ and the set of first order stationary point is $0$, and that the compactness of $\mathcal{M}$ implies that there is a subsequence of $\tilde{\sigma}^k$ that converges to a first-order stationary point, i.e. (C3) is valid. We hence prove Theorem~\ref{theorem-global-1st-prox} via Lemma~\ref{lemma-KL-convergence}.
\end{proof}

\section{Experimental results}\label{section-experiments}
In this section, we describe our computational results for Algorithm \ref{alg_ADMM} and Algorithm~\ref{alg_proxADMM}, named ADMM-BM and Prox-ADMM-BM respectively. All experiments are run on a PC with 3.0 GHz processor and 16GB memory, with all code written in Julia. For comparison, we also present computational results from Riemannian gradient (RGD) method and Riemannian trust-region (RTR) method from Manopt.jl~\cite{Bergmann22}. The benchmarking metric is the relative optimality gap defined as:
\begin{align}
	\text{relative optimality gap = } \left\vert \frac{\langle \tilde{\sigma}^k, C \tilde{\sigma}^k \rangle - \langle {\sigma^*}, C \sigma^* \rangle}{\langle {\sigma^*}, C \sigma^* \rangle} \right\vert, \label{relative-optimality}
\end{align}
where $\tilde{\sigma}^k$ is generated at the $k$-th iteration by Algorithm~\ref{alg_ADMM} or Algorithm~\ref{alg_proxADMM} that is feasible for~\eqref{sphere-form} or~\eqref{stiefel-form}, and $\sigma^*$ is the optimal solution obtained via Mosek \cite{mosek}.

\subsection{Max-cut}
For Algorithm~\ref{alg_ADMM}, we set $r = \lceil {\sqrt{2n}} \rceil$ and $\rho$ to $\|C\|$ and test it on the dataset Gset\footnote{\url{https://www.cise.ufl.edu/research/sparse/matrices/Gset/}}, which contains a collection of max-cut problems of the form~\eqref{SDP-form}. An initial condition $\tilde{\sigma}^0  \in \mathbb{R}^{n \times r}$ is generated uniformly at random on $[0,1]$ and then normalized row-wise. We also set $\sigma^0 = \tilde{\sigma}^0$ and $y^0 = C \tilde{\sigma}^0$ for ADMM-BM and $\tilde{\sigma}^0$ as the initial point for RGD and RTR. We test problems with sizes varying from $n=800$ to $n=10000$ and the results are shown in Figure~\ref{fig:test-maxcut}, where the x-axis denotes the computation time and the y-axis denotes the relative optimality defined in~\eqref{relative-optimality}. ADMM-BM always performs better than RGD and RTR at moderate accuracy $10^{-4}$. The computation time is within several seconds even for problem sizes up to $n=10000$.

\begin{figure}[ht]
	\centering
	\begin{tabular}{c@{}c@{}c@{}c@{}}%c@{}}
		&   \scriptsize{G1, $n=800$, $\|C\|=12.197$} & \scriptsize{G34, $n=2000$, $\|C\| = 0.899$}
		\vspace{-1mm}
		\\
		\rotatebox[origin=c]{90}{\scriptsize{Relative optimality}}  &
		\includegraphics[width=0.45\linewidth]{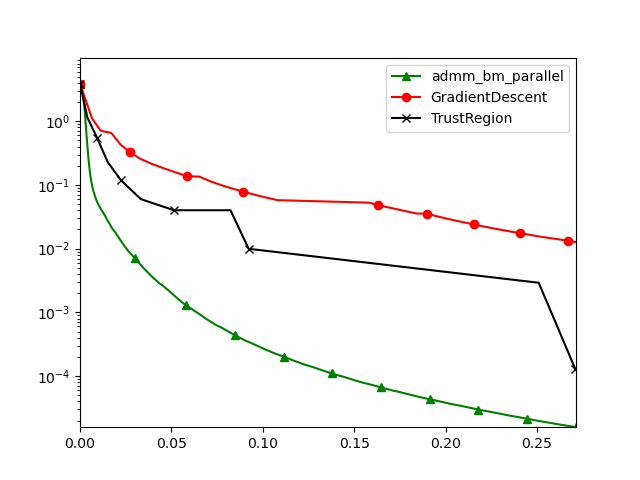}  &  \includegraphics[width=0.45\linewidth]{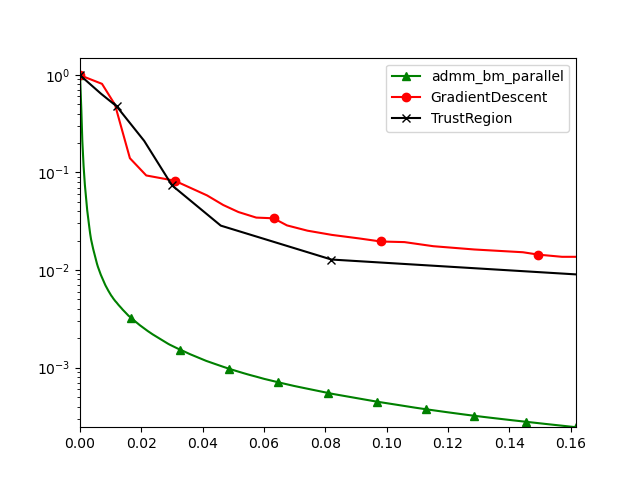}
		\vspace{-11mm}
		\\
		&   \scriptsize{time: $s$} & \scriptsize{time: $s$}
		%		\vspace{5mm}
		\\
		&   \scriptsize{G57, $n=5000$, $\|C\| = 0.889$} & \scriptsize{G67, $n=10000$, $\|C\|=1.676$}
		\vspace{-1mm}
		\\
		&
		\includegraphics[width=0.45\linewidth]{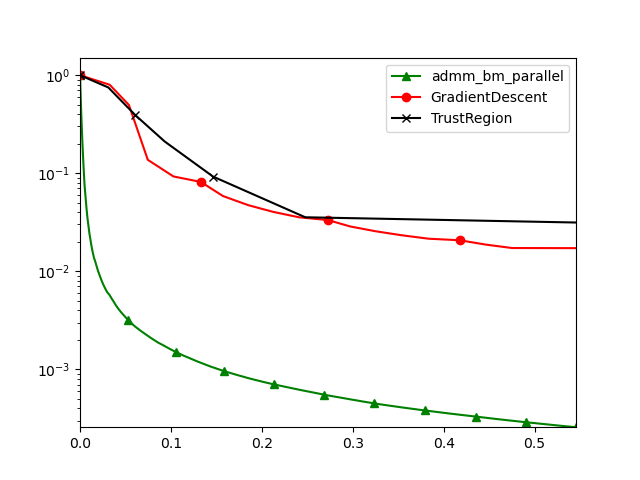}& \includegraphics[width=0.45\linewidth]{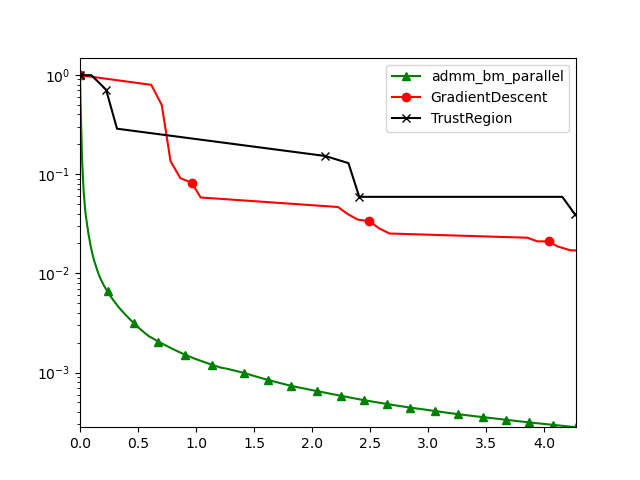}
		\vspace{-4mm}
		\\
		&   \scriptsize{time: $s$} & \scriptsize{time: $s$} \\
	\end{tabular}
	\caption{Tests of Algorithm~\ref{alg_ADMM} on max-cut problems.}
	\label{fig:test-maxcut}
\end{figure}

\subsection{SO(3) Synchronization}
The SO(3) synchronization problem~\cite{Mei17} is a typical problem of the form~\eqref{SDP-extended-form} where we choose $d=3$. In order to test Algorithm~\ref{alg_proxADMM}, each entry of $\tilde{\sigma}^0$ is generated uniformly at random on $[0,1]$ and then projected back to the product of Stiefel manifolds as in Lemma~\ref{lemma-project-Stiefel}. We initialize $\sigma^0 = \tilde{\sigma}^0, y^0 = C \tilde{\sigma}^0$ in Prox-ADMM-BM and $\tilde{\sigma}^0$ as the start point for RTG and RTR. In addition, penalty parameters are set to $\rho = \mu = \|C\|_2$ in Prox-ADMM-BM. We change the dimension from $n=300$ to $n=15000$ and also choose two sparsity pattern, $0.02$ and $0.002$. As shown in Figure \ref{fig:test-stiefel}, Prox-ADMM-BM always performs better than RTG and RTR up to accuracy level $10^{-4}$, while RTR performs better on a higher accuracy level for some cases due to local super-linear convergence. In addition, we find that the speed of Prox-ADMM-BM converges faster when the cost matrix $C$ is more sparse, which is reasonable since most of computational time is spent on matrix products when we update $\tilde{\sigma}$ and $\sigma$. This also applies to ADMM-BM.

\begin{figure}[ht]
	\centering
	\begin{tabular}{c@{}c@{}c@{}c@{}}%c@{}}
		&   \scriptsize{ $n=300, s=0.02$, $\|C\|_2=3.685$} & \scriptsize{ $n=3000, s=0.02$, $\|C\|_2=30.625$}
		\vspace{-1mm}
		\\
		\rotatebox[origin=c]{90}{\scriptsize{Relative optimality}}  &
		\includegraphics[width=0.475\linewidth]{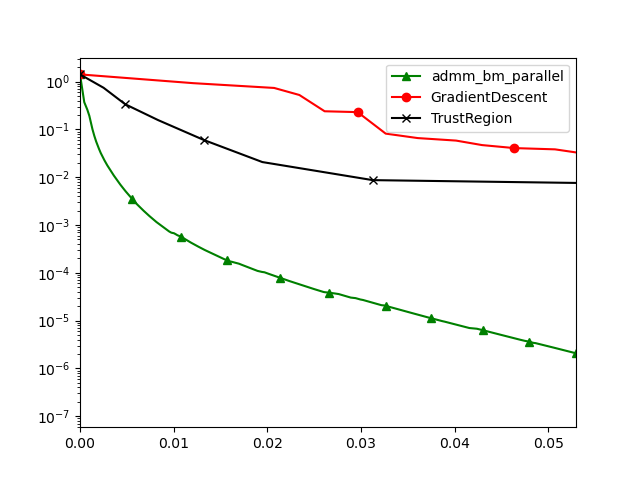}  &  \includegraphics[width=0.475\linewidth]{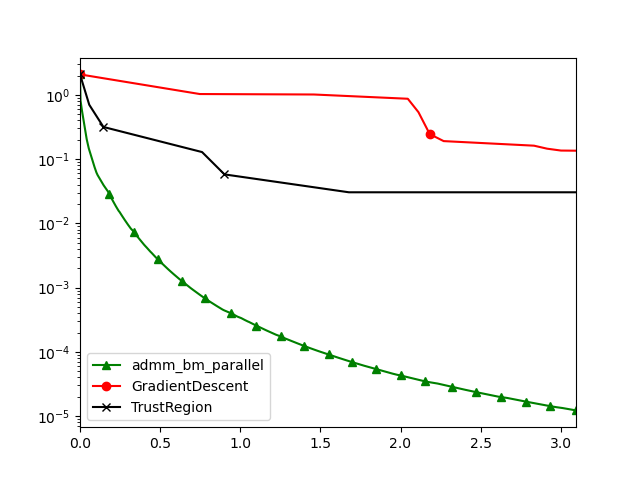}
		\vspace{-11mm}
		\\
		&   \scriptsize{time: $s$} & \scriptsize{time: $s$}
		%		\vspace{5mm}
		\\
		&   \scriptsize{$n=9000, s= 0.02$, $\|C\|_2= 90.588$} & \scriptsize{$n=9000, s= 0.002$, $\|C\|_2= 9.712$}
		\vspace{-1mm}
		\\
		&
		\includegraphics[width=0.475\linewidth]{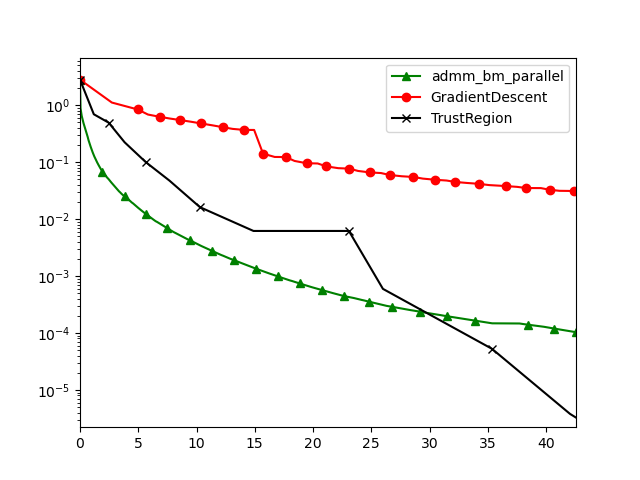}& \includegraphics[width=0.475\linewidth]{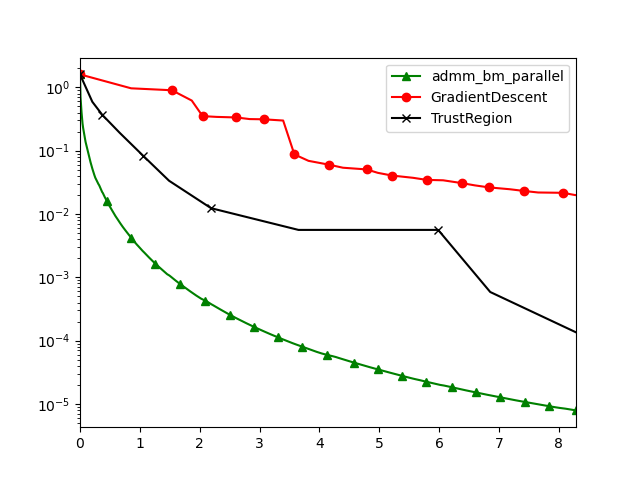}
		\vspace{-4mm}
		\\
		&   \scriptsize{time: $s$} & \scriptsize{time: $s$} \\
		&   \scriptsize{ $n=15000, s= 0.02$, $\|C\|_2= 150.843$} & \scriptsize{$n=15000, s= 0.002$, $\|C\|_2= 15.762$}
		\vspace{-1mm}
		\\
		&
		\includegraphics[width=0.475\linewidth]{Stiefel_n=3000_d=3_s=2e-2.png}& \includegraphics[width=0.475\linewidth]{Stiefel_n=3000_d=3_s=2e-3.png}
		\vspace{-4mm}
		\\
	\end{tabular}
	\caption{Tests of Algorithm~\ref{alg_proxADMM} on SO(3) synchronization problems.}
	\label{fig:test-stiefel}
\end{figure}

\section{Contribution \& Future Work}
In this paper, we proposed an ADMM-based Burer Monteiro method for diagonally constrained SDPs. It introduced a bilinear decomposition to the original problem such that all updates can be run in parallel. Theoretically, we proved a global convergence of Algorithm~\ref{alg_ADMM} to a first-order stationary point and showed that the algorithm scaled well to large-scale diagonal SDPs and converged to a global minimum empirically. We also introduced Algorithm~\ref{alg_ADMM2} with negative curvature exploitation, which guarantees a solution to $1 - O(1/r)$ approximation to SDPs. Moreover, Algorithm~\ref{alg_ADMM} can be generalized to Algorithm~\ref{alg_proxADMM} that solves SDPs with block-diagonal constraints with a global convergence to a first-order stationary point. Our experiments show that the proposed algorithm and its proximal variant outperform Riemannian manifold algorithms at moderate accuracy and both are suitable for large-scale SDPs.

\newpage
\bibliography{reference}

\end{document}